\newtheorem{thm}{Theorem}[section]
\newtheorem{lem}[thm]{Lemma}
\newtheorem{prop}[thm]{Proposition}
\theoremstyle{definition}
\newtheorem{defn}[thm]{Definition}
\theoremstyle{remark}
\newtheorem{rem}[thm]{Remark}
\newtheorem{ex}[thm]{Example}
\numberwithin{equation}{section}
\begin{document}

\title[$G$-vector bundle, Euler cycle]
 { Stratified obstruction systems for equivariant moduli problems and invariant Euler cycles}
 \author{Xiangdong Yang}
\address{Department of Mathematics\\
Sichuan University\\\newline
 Chengdu 610064 P. R. China}
\email{xiangdongyang2009@gmail.com}

\subjclass[2010]{57R22; 57R91}

\keywords{Equivariant vector bundle, equivariant moduli problem, Euler cycle }
\date{December 30, 2013.}




\begin{abstract}
The purpose of this paper is to study finite dimensional equivariant moduli problems from the viewpoint of stratification theory.
We show that there exists a stratified obstruction system for a finite dimensional equivariant moduli problem.
In addition, we define a coindex for a $G$-vector bundle which is determined by the $G$-action on the vector bundle and prove that if the coindex of an oriented equivariant moduli problem is bigger than $1$, then we obtain an invariant Euler cycle via equivariant perturbation.
In particular, we get a localization formula for the stratified transversal intersection of $S^{1}$-moduli problems.
\end{abstract}

\maketitle

\tableofcontents
\section{Introduction}

Assume that $\pi:E\longrightarrow B$ is an oriented smooth vector bundle of rank $k$ over an $n$-dimensional closed manifold $B$
and $S:B\longrightarrow E$ is a smooth section.
The zero locus $S^{-1}(0)\subset B$ contains a lot of topological information of the bundle $E$ when it is transversal to the zero section,
and those information is reflected as a cycle, in $B$, which is called the Euler cycle of $E$.
Furthermore, the Euler cycle represents a homology class which is a topological invariant of $E$.
For an oriented vector bundle, there are two dual viewpoints to construct such topological invariant.
 \begin{enumerate}
   \item  For any smooth section $S:B\longrightarrow E$ (not necessarily transversal) via a slightly smooth perturbation we get a smooth section $S+P$ which is transverse to the zero section and hence the zero locus $(S+P)^{-1}(0)\subset B$ is an oriented closed submanifold with dimension $n-k$.
       Moreover, this submanifold yields a homology class $[(S+P)^{-1}(0)]\in H_{n-k}(B;\mathbb{Z}),$
       which is independent of the choice of such perturbations.
   \item There exists a unique cohomology class on $E$, denoted by $\Theta\in H^{k}_{vc}(E)$ (vertical compact cohomology), that restricts to the generator of $H^{k}_{c}(F)$ (compact cohomology) on each fiber $F$.
       This class is called the Thom class of $E$,  and the cohomological Euler class of $E$ is defined as the pullback of the Thom class by the zero section.

 \end{enumerate}

 Now let us consider the equivariant case.
 Given a finite group $\Gamma$, if $\pi:E\longrightarrow B$ is a smooth $\Gamma$-equivariant vector bundle and $S$ is a smooth $\Gamma$-equivariant section, then the technique of perturbation for the section $S$ is ineffective since there is no $\Gamma$-equivariant smooth perturbed section of $S$ which is transversal to the zero section in general.
 Since $\Gamma$ is finite, Fukaya-Ono \cite{FO99} constructed a multi-valued perturbation such that each branch is transverse to the zero section.
 Moreover, the zero locus of such multisection gives rise to a cycle over the rational numbers $\mathbb{Q}$ and through this technique they defined the Euler class of an oriented orbibundle.
 In particular, for an oriented orbibundle $E\longrightarrow X$ with a smooth section $S$ having compact locus $S^{-1}(0)$, Lu-Tian \cite{LT07} gave a very thorough presentation of constructing multi-valued perturbation of $S$ by gluing together sections which resolute the fiber products over the local uniformizing charts of $S^{-1}(0)$.
 Moreover, they showed that the zero locus of the perturbed section yields a rational Euler cycle.

Generally, let $G$ be a connected compact Lie group and $B$ be a smooth manifold on which $G$ acts smoothly and effectively.
Suppose that $\pi:E\longrightarrow B$ is an oriented smooth $G$-vector bundle.
Using the equivariant cohomology theory, Mathai-Quillen constructed an equivariant Thom class $\Theta_{eq}$ of $E$ which is a compactly supported closed equivariant form such that its integral along the fibres is the constant function 1 on $B$ (cf. \cite{MQ86}).
Thus the equivariant Euler class of $E$ can be defined as the pullback of the equivariant Thom class by the zero section.
For a nontrivial action of a Lie group $G$ on the vector bundle $E$, it is generally unrealistic to require a smooth section to be equivariant and transversal. In fact, for a $G$-equivariant smooth section $S:B\longrightarrow E$, there exist some global obstructions (cf. \cite{SS92,TP74}) to deform $S$ to a $G$-equivariant smooth transversal section.
Therefore in the equivariant case the transversality is too rigid, and a natural problem is:
\emph{Can we define a new version of transversality in $G$-equivariant case such that}
\begin{itemize}
  \item \emph{every $G$-equivariant smooth section has an equivariant perturbation which satisfies this new transversality; and}
  \item \emph{how to construct the invariant Euler cycle of an oriented $G$-vector bundle?}
\end{itemize}

In fact, this is a \emph{finite dimensional equivariant moduli problem} in the sense of Cieliebak-Riera-Salamon (cf. \cite{CRS03}).
Independently, Bierstone and Field also discussed the transversality problem in the equivariant case.
In \cite{EB77}, Bierstone introduced the notion of general position for smooth $G$-equivariant maps between smooth $G$-manifolds.
Meanwhile, Field in \cite{MF177} proposed the concept of $G$-transversality and showed that the two definitions are equivalent (cf. \cite{MF377}).
An infinite dimensional version of equivariant general position was defined by Hambleton-Lee (cf. \cite{HL92}).
Furthermore they studied the equivariant perturbation of Yang-Mills moduli space with a compact Lie group action.

For an oriented finite dimensional $G$-moduli problem, which is regular, i.e. the isotropy subgroup of $G$-action on $B$ is finite, Cieliebak-Riera-Salamon constructed a rational cycle through multi-valued perturbation in the paper \cite{CRS03}.
The method of Cieliebak-Riera-Salamon is similar to the one that Fukaya-Ono used in constructing the Euler class of an oriented orbibundle.
The main issue is that for an equivariant vector bundle with a finite group action we can not guarantee that the transversal perturbation of an equivariant section is also equivariant in general, so the perturbed section can not descend to a single-valued section of the associated orbibundle.
Therefore the multi-valued perturbation is necessary to obtain the transversality.

The method we use here is to perturb the equivariant smooth section in the sense of equivariant general position and to represent the fundamental class with a Whitney object rather than a smooth submanifold.
Using the geometric chain (cycle) technique introduced by Goresky, we obtain the following main theorem.
\begin{thm}\label{Main Thm1}
Let $(B,E,S)$ be a finite dimensional oriented $G$-moduli problem with $\emph{dim}\,B=n$ and $\emph{rank}\,E=k$.
If $\emph{coind}\,(B,E)>1$, then there exists a smooth equivariant perturbation
$P:B\longrightarrow E$ supported in an invariant open neighborhood of $S^{-1}(0)$,
such that $S+P$ is in general position with respect to the zero section.
Furthermore, the zero locus $(S+P)^{-1}(0)\subset B$ yields a $G$-invariant $(n-k)$-geometric cycle and it represents a homology class
$$
[(S+P)^{-1}(0)]\in H_{n-k}(B;\mathbb{Z}),
$$
which is independent of the choice of such perturbations.
\end{thm}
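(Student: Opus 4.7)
The plan is to combine the stratified obstruction apparatus built earlier in the paper with Goresky's geometric chain technology, carrying out the construction orbit-type-stratum by orbit-type-stratum and using the coindex hypothesis to control both the dimensions of the lower strata and the interpolation between competing perturbations.

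\textbf{Step 1 (Construction of $P$).} I would stratify $B$ by orbit types $B_{(H)}$ and the total space $E$ accordingly, and build $P$ inductively from the deepest stratum outward. On each open stratum, the quotient $E_{(H)}/G \to B_{(H)}/G$ is an ordinary vector bundle, so an ordinary smooth perturbation supported in a compact neighborhood of $(S/G)^{-1}(0)$ produces a section transverse to the zero section; lifting and averaging over $G$ gives an equivariant perturbation on that stratum. The content lies in extending across strata: here I would invoke the stratified obstruction system produced earlier and a Thom–Mather tubular neighborhood of each stratum to glue the stratum-wise perturbations into a single smooth equivariant $P$ defined near $S^{-1}(0)$, cut off by an invariant bump function, so that $S+P$ is in equivariant general position with the zero section in the sense of Bierstone--Field. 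The coindex condition $\operatorname{coind}(B,E)>1$ enters to guarantee that the obstructions to equivariant transversality that appear at the passage between strata actually vanish.

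\textbf{Step 2 (The zero locus as a geometric cycle).} Once $S+P$ is in general position, the zero locus decomposes as the disjoint union of the strata $Z_{(H)}:=(S+P)^{-1}(0)\cap B_{(H)}$, each of which is a smooth submanifold of $B_{(H)}$ of the expected codimension. The top stratum has dimension $n-k$, and a dimension count using the definition of the coindex shows that every lower stratum has dimension strictly less than $n-k-1$. This is exactly the gap condition required by Goresky's framework for a stratified subset to define a geometric chain: the closure of $Z=(S+P)^{-1}(0)$ together with the orientation inherited from the orientations of $E$ and $B$ assembles into an $(n-k)$-dimensional Whitney stratified subset of $B$, hence a geometric chain. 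Because all strata of $B$ are $G$-invariant and $P$ is equivariant, the chain is $G$-invariant. The boundary of this geometric chain lives in the lower strata, which by the dimension gap have dimension at most $n-k-2$; a Whitney chain of this dimension cannot contribute to the boundary in codimension one, so $Z$ is in fact a cycle representing a class in $H_{n-k}(B;\mathbb{Z})$.

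\textbf{Step 3 (Independence of $P$).} For two admissible perturbations $P_0,P_1$, I would apply the same construction on $[0,1]\times B$ with the pulled-back equivariant bundle and the homotopy $(1-t)(S+P_0)+t(S+P_1)$, producing a stratum-wise transverse equivariant perturbation $\widetilde P$ with $\widetilde P|_{\{i\}\times B}=P_i$. By Step 2 applied in dimension $n+1$, the zero locus is an $(n-k+1)$-dimensional invariant geometric chain whose boundary is exactly $(S+P_1)^{-1}(0)-(S+P_0)^{-1}(0)$, so the two cycles are homologous.

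\textbf{Main obstacle.} The extension across strata in Step 1 is the hard part: one must verify that the abstract obstruction classes produced by the stratified obstruction system vanish under the hypothesis $\operatorname{coind}(B,E)>1$, and that the resulting equivariant section genuinely lies in general position in the Bierstone–Field sense rather than merely being transverse on each stratum. I expect this to require a careful compatibility check between the local $H$-slice models near points of $B_{(H)}$ and the inductively constructed perturbation on adjacent strata, combined with an averaging argument to preserve equivariance without destroying transversality.
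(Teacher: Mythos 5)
Your Steps 2 and 3 are essentially the paper's argument: the zero locus decomposes by isotropy type, the top stratum has dimension $n-k$, the coindex hypothesis forces the lower strata into dimension at most $n-k-2$ so that no codimension-one stratum exists and the cycle condition is automatic, and the independence follows from a parametrized version over $B\times[0,1]$ together with the equivariant isotopy theorem. The paper even isolates the relative extension lemma you gesture at (its Lemma~4.20) to produce a section in general position that agrees with the given one on the two ends.

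However, Step 1 contains a genuine misconception about the role of the coindex. You write that $\operatorname{coind}(B,E)>1$ is what ``guarantee[s] that the obstructions to equivariant transversality that appear at the passage between strata actually vanish.'' This is not what happens, and it is not what the paper does. Equivariant general position in the Bierstone--Field sense is \emph{always} achievable, with no numerical hypothesis on the bundle: Bierstone's Theorem~1.4 (quoted in the paper as Theorem~\ref{thm2.2.9}) says the set of equivariant sections in general position is a countable intersection of open dense sets. The construction of $P$ in the paper is purely local: around each $x\in S^{-1}(0)$ one takes a slice $(U,\phi,G_x)$, identifies $\mathcal{C}^\infty_{G_x}(U,E_x)$ as a finitely generated module over $\mathcal{C}^\infty_{G_x}(U)$, writes the local section as $\sum h_j F_j$, perturbs the coefficient functions $h_j$ by small constants so the graph map becomes transverse to the universal variety $\mathcal{E}_i$, and then glues with a $G$-invariant partition of unity. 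No obstruction needs to vanish in this step, and the coindex never appears. The coindex only enters in Step 2, to bound the dimensions $r_H=\dim B_{(H)}-\operatorname{rank}\mathcal{E}_H$ of the lower strata.

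A second issue in your Step~1 is the passage to quotients. On a stratum $B_{(H)}$ with $H\neq e$ the quotient $E\mid_{B_{(H)}}/G\to B_{(H)}/G$ is not an ordinary vector bundle because the isotropy group acts nontrivially on the fibers; only the fixed subbundle $\mathcal{E}_H$ descends. So one cannot ``perturb downstairs and lift and average'' to cure transversality on a stratum, precisely because the obstruction bundle $\mathcal{O}_H$ blocks stratum-wise transversality --- this is why the paper replaces transversality by general position in the first place. Finally, note that inductive gluing across Thom--Mather tubes is unnecessary: since $S^{-1}(0)$ is compact one covers it by finitely many $G$-invariant slice neighborhoods, perturbs there, and glues with an invariant partition of unity; the density statement does the rest.
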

This paper is organized as follows.
We devote Section 2 to the preliminaries of the definition of Whitney stratified chains and the definition of general position for equivariant smooth maps.
In Section 3 we construct the obstruction system of a $G$-moduli problem.
Furthermore, we define a coindex $\textmd{coind}(B,E)$ for a $G$-vector bundle $\pi:E\longrightarrow B$.
In Section 4 we give the proof of Theorem \ref{Main Thm1}.
Finally, in Section 5 we study the transversal intersection of $S^{1}$-moduli problems
and show that all geometric information of transversal intersection is contained in the fixed submanifold of the $S^{1}$-action (Theorem \ref{Main Thm2}).

\subsection*{Acknowledgements}
I am greatly indebted to my supervisors Prof. Guosong Zhao and Prof. Xiaojun Chen for their constant encouragements,
supports and many useful discussions.
Particular thanks go to Prof. Bohui Chen who made numerous helpful suggestions.
I also thank Prof. Quan Zheng for guidance over the past years.
Finally, I am grateful to the anonymous referee for useful comments and suggestions.

\section{Preliminaries}

\subsection{Whitney stratification and geometric cycles}
The stratified space is motivated by the study of singular manifolds, which naturally arises in the study of algebraic, analytic varieties, and singularities of smooth mappings.
Intuitively, a stratified space is an object constructed by gluing some smooth manifolds with different dimensions nicely.

\begin{defn}
Let $X$ be a Hausdorff and paracompact topological space, and $\mathcal{I}$ be a poset with order relations denoted by $\leq$.
If $X$ is a locally finite collection of disjoint locally closed manifolds $S_{i}\subset X \,(i\in \mathcal{I})$ and satisfies the following conditions:
\begin{enumerate}
  \item $X=\cup_{i\in \mathcal{I}}S_{i}$;
  \item $S_{i}\cap \bar{S_{j}}\neq\emptyset\Longleftrightarrow S_{i}\subset \bar{S_{j}}\Longleftrightarrow i\leq j$;
\end{enumerate}
then the family $\mathcal{S}=\{S_{i}\subset X\mid i\in \mathcal{I}\}$ is called a \emph{stratification} of $X$,
and $(X,\mathcal{S})$ is called a \emph{stratified space}.
A piece $S_{i}\in \mathcal{S}$ is called a \emph{stratum} of $X$.

A \emph{stratified subspace} of $(X,\mathcal{S})$ is a subset $Y\subset X$
such that $$\mathcal{S}_{Y}=\{S\cap Y\mid S\in \mathcal{S}\}$$ is a stratification of $Y$ with the induced topology.
\end{defn}

If $S_{i}\subset \bar{S}_{j}$, write $S_{i}\leq S_{j}$.
If $S_{i}\leq S_{j}$ and $S_{i}\neq S_{j}$, write $S_{i}<S_{j}$.

\begin{defn}
Let $\mathcal{S}$ be a stratification of the space $X$.
The \emph{length of a stratum} $S\in \mathcal{S}$ is defined to be the integer
\[l_{X}(S):=\sup\{n\mid  S=S_{0}<S_{1}<\cdot\cdot\cdot<S_{n}\}\]
where $S_{1},...,S_{n}$ are strata of $X$.
The \emph{length of stratified space} $(X,\mathcal{S})$ is defined to be
\[l(X):=\sup_{i\in \mathcal{I}}l_{X}(S_{i}).\]
\end{defn}

A stratum $S\in\mathcal{S}$ is called \emph{maximal} (resp. \emph{minimal}) if it is open (resp. closed).
The \emph{dimension of a stratified space} $(X,\mathcal{S})$ is defined to be the dimension of the maximal stratum.
The stratum $S\in \mathcal{S}$ is called \emph{regular stratum} if it is open in $X$, otherwise it is called \emph{singular stratum}.
The union of all singular strata, denoted by $\Sigma$, is called the \emph{singular part} of $X$.
And the minimal part, denoted by $\Sigma_{min}$, is the union of minimal strata.
Let $X_{i}=\bigcup_{j\leq i}S_{j}$, and $X_{i}$ is called a \emph{skeleton} of $X$.
There exists a finite filtration of skeletons
\[X=X_{m}\supseteq X_{m-1}\supseteq\cdot\cdot\cdot\supseteq X_{0}\supseteq X_{-1}=\emptyset,\]
where $m$ is called the \emph{depth} of $X$.

\begin{ex}
 A smooth manifold $M$ is a stratified space with empty singular part $\Sigma=\emptyset$.
 \end{ex}

\begin{ex}
Let $V$ be a subset of $\mathbb{R}^{n}$.
Then $V$ is called an \emph{algebraic set} of $\mathbb{R}^{n}$, if it is
the common loci of finitely many real polynomials.
Note that the singular set $\Sigma V$ of all points where $V$ fails to be a smooth manifold is also an algebraic set, hence there is a finite filtration of $$V=V_{m}\supseteq V_{m-1}\supseteq\cdot\cdot\cdot\supseteq V_{0}\supseteq V_{-1}=\emptyset$$ with $V_{i-1}=\Sigma V_{i}$.
Clearly $V$ is a stratified space with stratum $S_{i}=V_{i}- V_{i-1}$.
\end{ex}

Inspired by the ideas of Thom on stratifications, Whitney introduced  \emph{Condition A} and \emph{Condition B} (cf. \cite{HW165}).
Actually, Condition B implies Condition A, this was proved by Mather in his lecture notes \cite{JM12}.
Therefore, in general we only use Condition B to define a Whitney stratification.
Let us recall the definition of Whitney's Condition B.
Given any $x,y\in \mathbb{R}^{n}$ such that $x\neq y$, the secant $\overset{\frown}{xy}$ is defined to be the line in $\mathbb{R}^{n}$ which is parallel to the line $\overline{xy}$ (line joining $x$ and $y$) and passed through the origin.
\begin{defn}
(Condition B for submanifolds of $\mathbb{R}^{n}$) Let $X$ and $Y$ be the smooth submanifolds of $\mathbb{R}^{n}$.
Assume that $\textmd{dim}X=r$.
We say that the pair $(X,Y)$ satisfies \emph{Condition B} at a given point $y\in Y$, if the following holds: Let $\{x_{i}\}$ and $\{y_{i}\}$ be two sequences of points in $X$ and $Y$ respectively, satisfying  $\{x_{i}\}$ and  $\{y_{i}\}$ converging to $y$.
Suppose that the tangent space $T_{x_{i}}X$ converges to some $r$-plane $\tau\subset \mathbb{R}^{n}$, and the secants $\overset{\frown}{x_{i}y_{i}}$ $(x_{i}\neq y_{i})$ converge to some line $l\subset \mathbb{R}^{n}$, then $l\subset\tau$.
\end{defn}

This definition can be extended to submanifolds of arbitrary smooth manifolds.

\begin{defn}
Let $M$ be a smooth $m$-manifold, $X$ and $Y$ be smooth submanifolds.
Given $y\in Y$, we say that the pair $(X,Y)$ satisfies \emph{Condition B} at $y$, if for some coordinate chart $(\varphi,U)$ about $y$, the pair
$$(\varphi(U\cap X),\varphi(U\cap Y))$$
satisfies \emph{Condition B} at $\varphi(y)$ in $\mathbb{R}^m$.
This definition is well-defined, as it is independent of the choice of the coordinate chart (cf. \cite{JM12}).
\end{defn}

\begin{ex}(\cite[Theorem 4.3.7]{JP01})
Let $G$ be a compact Lie group.
If $M$ is a smooth manifold on which $G$ acts smoothly, then the stratification by orbit types of $M$, denoted by
 $$M=\bigsqcup_{H<G}M_{(H)},$$
 makes $M$ into a Whitney stratified space, where $M_{(H)}$ is the set of points in $M$ such that the isotropy subgroup of each point is conjugate to $H$.
\end{ex}
\begin{rem}
In general, a stratum $M_{(H)}$ may have connected components with different dimensions.
In this case we can refine the decomposition to make each piece of the stratification is a submanifold in $M$.
To keep our notation manageable we refine such decomposition and still write it as $M_{(H)}$.
\end{rem}

\begin{defn}
Let $X$ be a closed subset of a smooth manifold $M$.
We say that $X$ admits a \emph{Whitney stratification}, if there exists a stratification $\mathcal{S}$ on $X$ with a filtration of $X$ by closed subsets
$$
X=X_{m}\supseteq X_{m-1}\supseteq\cdot\cdot\cdot\supseteq X_{0}\supseteq X_{-1}=\emptyset,
$$
and any pair of strata $(S_{i},S_{j}),(i\leq j)$ satisfies \emph{Condition B}.
\end{defn}
The subset $X$ together with the Whitney stratification is called a \emph{Whitney object}.
Specially, if $W\subset X$ is a closed subset with a Whitney stratification such that each stratum of $W$ is contained in a single stratum of $X$, then $W$ is called a \emph{Whitney substratified object} of $X$.

The idea of representing cocycles by geometric objects was introduced by Whitney in \cite{HW47}.
In \cite{RMG81}, Goresky gave all technical constructions for a geometric description of homology and cohomology in the context of Whitney stratifications.
Let us recall the Goresky's method of geometric chains (cycles).

\begin{defn}[Goresky \cite{RMG81}]
A \emph{geometric $k$-chain} $\xi$ in a fixed Whitney object $X$ consists of a compact $k$-dimensional Whitney substratified object $|\xi|\subset X$, which is called the support of $\xi$, together with an orientation of $|\xi|$ which is a choice of an orientation and multiplicity of each $k$-dimensional stratum.
The set of orientations of $|\xi|$ is just the group $H_{k}(|\xi|,|\xi|_{k-1})$.
\end{defn}

\begin{defn}
Let $\xi$ be a geometric $k$-chain in $X$.
The \emph{reduction} of $\xi$ is the geometric chain whose support is the closure of the union of all components of $|\xi|-|\xi|_{k-1}$ which have been assigned a nonzero multiplicity.
In particular, we can identify a geometric chain with its reduction.
\end{defn}

Consider the pairs $(|\xi|,|\xi|_{k-1})$ and $(|\xi|_{k-1},|\xi|_{k-2})$, there exist two exact homology sequences as follows:
\begin{equation}\label{1}
\xymatrix@C=0.5cm{
  \cdot\cdot\cdot \ar[r] & H_{k}(|\xi|_{k-1}) \ar[r]^{i_{*}} & H_{k}(|\xi|)\ar[r]^{j_{*}\quad\,\,}& H_{k}(|\xi|,|\xi|_{k-1}) \ar[r]^{\partial_{k}} & H_{k-1}(|\xi|_{k-1}) \ar[r] & \cdot\cdot\cdot }
\end{equation}
and
\begin{equation}
\xymatrix@C=0.5cm{
  \cdot\cdot\cdot \ar[r] & H_{k-1}(|\xi|_{k-2}) \ar[r]^{i_{*}} & H_{k-1}(|\xi|_{k-1}) \ar[r]^{j_{*}\quad\,} & H_{k-1}(|\xi|_{k-1},|\xi|_{k-2}) \ar[r]^{\quad\,\partial_{k-1}} & H_{k-2}(|\xi|_{k-2}) \ar[r] & \cdot\cdot\cdot }
\end{equation}
where $\partial_{k}$ and $\partial_{k-1}$ are the boundary operators.
Given a geometric $k$-chain $\xi$, the boundary of $\xi$, denoted by $\partial\xi$, is defined to be the geometric $(k-1)$-chain with the support $|\xi|_{k-1}$ and the orientation induced from sequence
\begin{equation}
\xymatrix@C=0.5cm{
   & H_{k}(|\xi|,|\xi|_{k-1}) \ar[r]^{\partial_{k}} & H_{k-1}(|\xi|_{k-1}) \ar[r]^{j_{*}\quad\,} & H_{k-1}(|\xi|_{k-1},|\xi|_{k-2}). }
\end{equation}

\begin{defn}
We say that $\xi$ is a \emph{geometric $k$-cycle} if the boundary of $\xi$ satisfies $\partial\xi=0$.
\end{defn}

Note that the orientation of $\xi$ is a homology class $O_{\xi}\in H_{k}(|\xi|,|\xi|_{k-1})$, and $\partial\xi=0$ implies that $\partial_{k}O_{\xi}=0$.
Due to the exactness of the homology sequence (\ref{1}) there exists a unique \emph{fundamental class} $\mu_{\xi}\in H_{k}(|\xi|)$ such that $j_{*}(\mu_{\xi})=O_{\xi}$. Let $\iota:|\xi|\longrightarrow X$ be the inclusion, then $\xi$ represents a homology class $[\xi]=\iota_{*}\mu_{\xi}\in H_{k}(X)$. There is an equivalence relation, called \emph{cobordism}, between geometric $k$-cycles.

\begin{defn}
Let $\xi_{0}$ and $\xi_{1}$ be two geometric $k$-cycles in $X$.
They are called \emph{cobordant} if there exists a geometric $(k+1)$-chain $\eta$ in $X\times \mathbb{R}$ and some $\varepsilon>0$ such that
\begin{enumerate}
  \item $|\eta|\subset X\times [0,1]$;
  \item $|\eta|\cap X\times[0,\varepsilon)=|\xi_{0}|\times[0,\varepsilon)$;
  \item $|\eta|\cap X\times(1-\varepsilon,1]=|\xi_{1}|\times(1-\varepsilon,1]$;
  \item $\partial\eta=\xi_{1}\times\{1\}-\xi_{0}\times\{0\}$ (modulo reduction).
\end{enumerate}
\end{defn}

Denote the set $WH_{k}(X)$ by the cobordism classes of geometric $k$-cycles in $X$.
Note that the cobordant cycles represent the same homology class, and therefore, we get a \emph{representation map}
\begin{equation}\label{2}
  R:WH_{k}(X)\longrightarrow H_{k}(X).
\end{equation}
In particular, assume that $Y\subset X$ is an oriented compact Whitney substratified object.
If there are no strata of $Y$ with codimension one, then the cycle condition automatically holds, i.e. $Y$ represents a homology class in $X$.

\subsection{General position of equivariant smooth maps}

In this subsection, we recall the definition of general position for a $G$-equivariant map and state some elementary properties,
for more details refer to \cite{EB77}.

Let $V$ be finite dimensional vector space.
Then $V$ is called a \emph{$G$-space} if there exists a representations of $G$ over $V$, $\rho_{V}:G\longrightarrow GL(V)$.
A smooth map $F:V\longrightarrow W$ of two $G$-spaces is called a $G$-equivariant map if for any $g\in G$ we have
$
(\rho_{W}(g))\circ F=F\circ(\rho_{V}(g)).
$
The set of all smooth $G$-equivariant maps is denoted by $\mathcal{C}^{\infty}_{G}(V,W)$.
Let $G$ acts on the $\mathbb{R}$ trivially.
A smooth function $f:V\longrightarrow \mathbb{R}$ is \emph{$G$-invariant} if it satisfy the condition $f\circ(\rho_{V}(g))=f$, for all $g\in G$.

Let $\mathcal{C}^{\infty}_{G}(V)$ be the set of $G$-invariant smooth functions on $V$.
Then $\mathcal{C}^{\infty}_{G}(V,W)$ has the structure of a $\mathcal{C}^{\infty}_{G}(V)$-module with finite polynomial generators (\cite[Lemma 3.1]{MF177}).
Suppose that $\{F_{_{1}},...,F_{k}\}$ is the set of polynomial generators, then for every $G$-equivariant map $F\in \mathcal{C}^{\infty}_{G}(V,W)$ there exist unique $G$-invariant functions $h_{i}\in \mathcal{C}^{\infty}_{G}(V)$,$(1\leq i\leq k)$ such that
$$
F(x)=\sum^{k}_{i=1}h_{i}(x)F_{i}(x) ,\forall x\in V.
$$
Define the map
\begin{equation}
U:V\times \mathbb{R}^{k}\longrightarrow W,\quad(x;t_{1},...,t_{k})\longmapsto \sum^{k}_{i=1}t_{i}F_{i}(x).
\end{equation}
The zero set of the $U$, denoted by
$$
\mathcal{E}:=\{(x,t)\in V\times \mathbb{R}^{k}\mid U(x,t)=0\},
$$
is called the \emph{universal variety}.

The universal variety $\mathcal{E}$ is a real affine algebraic variety which is uniquely determined (up to product with an affine space) by $V$ and $W$; moreover, $\mathcal{E}$ admits a unique minimum Whitney stratification.

\begin{defn}
 Define the map
\begin{equation}
\Gamma(F):V\longrightarrow V\times \mathbb{R}^{k},\quad x\longmapsto (x,h_{1}(x),...,h_{k}(x)).
\end{equation}
The map $\Gamma(F)$ is called the \emph{graph map} of $F$.
\end{defn}

Clearly we get $F=U\circ \Gamma(F)$ and $F^{-1}(0)=\Gamma(F)^{-1}(\mathcal{E})$.
The universal variety $\mathcal{E}$ contains the information about all possible zero sets for $F\in \mathcal{C}^{\infty}_{G}(V,W)$.
Suppose that $X$ is a smooth manifold and $E\subset\mathbb{R}^{q}$ is an algebraic subvariety.
A smooth map $f:X\longrightarrow \mathbb{R}^{q}$ is \emph{transverse to} $E$ means that $f$ is transverse to each stratum of the minimum Whitney stratification of $E$.

\begin{defn}\label{def2.2.2}
Let $F\in\mathcal{C}^{\infty}_{G}(V,W)$ such that $F(0)=0$.
Then $F$ is \emph{in general position with respect to} $0\in W$ at $0\in V$, if the graph map $\Gamma(F)$
is transversal to the minimum Whitney stratification of the universal variety $\mathcal{E}$ in $V\times \mathbb{R}^{k}$.
\end{defn}

\begin{defn}\label{3}
If $W=W_{1}\oplus W_{2}$ is a direct sum decomposition of $G$-spaces $W_{1}$ and $W_{2}$, then the $G$-equivariant map
$$F=(F_{1},F_{2}):V\longrightarrow W_{1}\oplus W_{2}$$
is in general position with respect to $W_{1}\subset W$ at $0\in V$ if and only if the map
$$F_{2}:V\longrightarrow W_{2}$$
is in general position with respect to $0\in W_{2}$ at $0\in V$.
\end{defn}

Next we review some basic properties of smooth actions of a compact Lie group on manifolds.
Let $G$ be a compact Lie group and $M$ be a smooth manifold.
A \emph{smooth $G$-action} on $M$ is a smooth map
\begin{equation}
l:G\times M\longrightarrow M,\,\,(g,x)\longmapsto gx
\end{equation}
such that $l(e,x)=x$ and $(g_{1}g_{2})x=g_{1}(g_{2}x)$ for any $ g_{1},g_{2}\in G,x\in M.$

For any $g\in G$ we can construct a smooth map
\begin{equation}
\mu_{g}:M\longrightarrow M,\,\, x\longmapsto gx.
\end{equation}
A point $x\in M$ is called a \emph{fixed point} if $\mu_{g}(x)=x$ for any $g\in G$. For any $x\in M$ the set
$$G(x)=\{gx\mid \forall g\in G\}$$
is called the \emph{orbit of $x$}, and the closed subgroup
$$G_{x}=\{g\in G\mid gx=x\}$$
is called the \emph{isotropy subgroup} of $x$.
The action of $G$ on $M$ is \emph{effective} if the map $\mu_{g}$ is the identity mapping on $M$ only for $g=e$, where $e$ is the identity of $G$.
We say that the $G$-action of $G$ is \emph{free} if for any $x\in M$, $\mu_{g}(x)= x$ implies that $g=e$.

\begin{defn}
Assume that the Lie group $G$ acts smoothly on manifold $M$.
The action is \emph{proper} if for every compact subset $K\subset M$, the set
$$
G_{K}=\{g\in G\mid (gK)\cap K\neq \emptyset\}
$$
is compact.
\end{defn}

If $G$ is a compact Lie group, then the smooth action of $G$ on a smooth manifold $M$ is proper.
A manifold $M$ is called a \emph{$G$-manifold} if $G$ acts on $M$ smoothly.
In particular, if the action is proper then $M$ is called a \emph{proper $G$-manifold}.
The set of all closed subgroups of $G$ admits an equivalence relation as follows:
 $$H\sim H^{\prime}\Longleftrightarrow H=gH^{\prime}g^{-1}$$
for some $g\in G$.
The equivalence classes, denoted by $(H)$, are called the \emph{conjugacy classes}.
Moreover, the set of conjugacy classes bears a partial order:
 $(H)\leq(H^{\prime})$ if there exists a $g\in G$ such that $H\subseteq gH^{\prime}g^{-1}$.
For an orbit $G(x)$ the isotropy groups $G_{gx}$ form a conjugacy class $(G_{x})$, which is called the \emph{isotropy type} of the orbit $G(x)$.

Given any $x\in M$, the orbit $G(x)$ describes a $G$-invariant closed submanifold of $M$.
Furthermore, $G(x)$ is isomorphic to $G/G_{x}$ via the canonical mapping
 \begin{equation}
 \Phi_{x}:G/G_{x}\longrightarrow M,\,\, gG_{x}\longmapsto gx.
\end{equation}

In particular, for a proper $G$-manifold there exists the \emph{$G$-invariant partition of unity} (cf. \cite[Theorem 4.2.4.]{JP01}) for any covering of the manifold by $G$-invariant open subsets and \emph{$G$-invariant tubular neighborhood theorem} (cf. \cite[Theorem 2.2.]{GB72}) for a closed invariant submanifold.

For each $x\in M$, the normal space $S_{x}=T_{x}M/T_{x}(G(x))$ is called the slice of $x$.
Note that the homogenous space $G\longrightarrow G/G_{x}$ is a $G_{x}$-principal fiber bundle; furthermore, we get an associated bundle $N_{x}=G\times_{G_{x}}S_{x}$, which is called the \emph{slice bundle} of $x$.
The following \emph{differential slice theorem} shows that every smooth $G$-manifold $M$ with a proper $G$-action locally looks like a neighborhood of the zero section in the slice bundle.
For the original proof of the slice theorem refer to Palais \cite{RP61}.

 \begin{thm}[cf. \cite{JP01}, Theorem 4.2.6]
Let $\Phi:G\times M\longrightarrow M$ be a proper group action, $x$ a point of $M$ and $S_{x}=T_{x}M/T_{x}(G(x))$ the normal space to the orbit of $x$.
Then there exists a $G$-equivariant diffeomorphism from a $G$-invariant neighborhood of the zero section of $G\times_{G_{x}}S_{x}$ onto a $G$-invariant neighborhood of $G(x)$ such that the zero section is mapped onto $G(x)$ in a canonical way.
\end{thm}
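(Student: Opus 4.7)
The plan is to reduce the slice theorem to the ordinary tubular neighborhood theorem, carried out with respect to a $G$-invariant Riemannian metric on $M$. First I would record that properness of $\Phi$ forces the isotropy group $G_{x}$ to be compact (it is the preimage of $(x,x)$ under the proper map $(g,y)\mapsto(gy,y)$), and that the orbit map $\Phi_{x}:G/G_{x}\to M$ is a smooth injective immersion whose image $G(x)$ is a closed embedded submanifold. Next I would construct a $G$-invariant Riemannian metric $h$ on $M$ by averaging an arbitrary metric locally over the compact isotropy groups and patching the pieces with the $G$-invariant partition of unity already cited in the paper for proper $G$-manifolds.

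With $h$ in hand, the $h$-orthogonal complement $N=TG(x)^{\perp}\subset TM|_{G(x)}$ is a $G$-equivariant vector subbundle of $TM|_{G(x)}$ whose fibre over $x$ is canonically identified with the normal space $S_{x}=T_{x}M/T_{x}(G(x))$. Since $N\to G(x)\cong G/G_{x}$ is the $G$-bundle associated to the principal $G_{x}$-bundle $G\to G/G_{x}$ with fibre the isotropy representation on $S_{x}$, the associated-bundle construction provides a canonical $G$-equivariant isomorphism
\[
N\;\cong\;G\times_{G_{x}}S_{x}.
\]
The exponential map $\exp^{h}$ of $h$ is $G$-equivariant because each translation $\mu_{g}$ is an isometry, and its restriction to $N$ is a local diffeomorphism along the zero section by the inverse function theorem.

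The final step is to promote a (a priori non-invariant) open neighborhood $U_{0}$ of the zero section of $N$ on which $\exp^{h}$ is a diffeomorphism onto its image to a genuinely $G$-invariant neighborhood $U$ with the same property, and then verify that $\exp^{h}(U)$ is an open $G$-invariant neighborhood of $G(x)$. I would do this by choosing a positive $G$-invariant smooth function $\varepsilon:G(x)\to(0,\infty)$, produced via the invariant partition of unity, so that the $\varepsilon$-disk bundle $N_{\varepsilon}$ sits inside $U_{0}$ and $\exp^{h}$ is still injective on $N_{\varepsilon}$; composing $\exp^{h}|_{N_{\varepsilon}}$ with the isomorphism $N\cong G\times_{G_{x}}S_{x}$ then yields the required equivariant diffeomorphism carrying the zero section onto $G(x)$. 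The main obstacle I anticipate is precisely this pair of invariance arguments---the construction of $h$ and of an invariant $\varepsilon$-neighborhood on which $\exp^{h}$ remains injective---because $G$ need not be compact and naive averaging over $G$ is unavailable; properness must be invoked twice, first to localize the averaging and second to rule out collisions by distant group elements, so that injectivity on the small non-invariant $U_{0}$ can be upgraded to injectivity on the $G$-saturated neighborhood $N_{\varepsilon}$.
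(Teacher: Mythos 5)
The paper does not give a proof of this theorem: it is quoted verbatim from Pflaum \cite{JP01} (Theorem 4.2.6), with the original argument attributed to Palais \cite{RP61}. Your proposal reproduces exactly that standard route --- properness forces $G_{x}$ compact, build a $G$-invariant metric $h$ using the invariant partition of unity, identify the $h$-normal bundle of the (closed, embedded) orbit with the associated bundle $G\times_{G_{x}}S_{x}$, and use the $G$-equivariant normal exponential map --- so the approach is the one the cited source uses, and the overall plan is correct.

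Two remarks worth recording. First, since $G$ acts transitively on $G(x)\cong G/G_{x}$, any $G$-invariant function on the orbit is constant, so the $\varepsilon$-thickening of the zero section is automatically uniform; the invariant partition of unity plays no role in that particular step (it is used earlier, to build $h$). Second, you are right that the genuine content of the shrinking step is the ``no collisions'' argument: one must show that after passing to a sufficiently small $G$-saturated disk bundle $N_{\varepsilon}$, the map $\exp^{h}$ is injective, which requires ruling out $g\cdot\exp^{h}(v)=\exp^{h}(w)$ for $v,w$ small but $g$ far from $G_{x}$. Your proposal only flags this, and for a complete proof it needs to be carried out; the tool is precisely the properness of $\theta\colon(g,y)\mapsto(gy,y)$, which lets one extract a convergent subsequence of offending group elements and derive a contradiction with injectivity of $\exp^{h}$ on a non-invariant chart near $x$. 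As a proposal the argument is sound and matches the reference; this collision step is the one place where the sketch must be expanded before it is a proof.
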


Summarize what we can get from the differential slice theorem: If $G$ is a compact Lie group, then for each $x\in M$ there exists a $G_{x}$-invariant submanifold in $M$, denoted by $S$, such that
\begin{enumerate}
 \item $x\in S$;
 \item $gS\cap S\neq \emptyset\Longrightarrow g\in G_{x}$;
 \item $\forall y\in S\Longrightarrow G_{y}\subseteq G_{x}$;
 \item $GS=\{gy\mid g\in G,y\in S\}$ is a $G$-invariant open neighborhood of the orbit $G(x)$ in $M$.
 \end{enumerate}

\begin{defn}
Suppose that $M$ and $N$ are smooth $G$-manifolds.
A smooth map $f\in \mathcal{C}^{\infty}(M,N)$ is called a \emph{$G$-equivariant smooth map} if the $G$-action commutes with $f$, i.e.
$$f(gx)=g(f(x))$$
for any $x\in M$ and $g\in G$.
\end{defn}
Clearly, if $f$ is $G$-equivariant then $G_{x}\subseteq G_{f(x)}$ for any $x\in M$.
Let $P\subset N$ be a $G$-invariant submanifold.
For any $x\in f^{-1}(P)$, we can choose a $G_{x}$-equivariant diffeomorphism $\phi$ from a neighborhood $V$ of $f(x)$ in $N$ to a $G_{x}$-vector space $W_{1}\oplus W_{2}$ such that $\phi(V\cap P)=W_{1}$.
The pair $(V,\phi)$ is called a $G_{x}$-chart for $P$ at $f(x)$.
Choose a slice at $x$, denoted by $S_{x}$, then $f$ determines a smooth $G_{x}$-equivariant map
$$\tilde{f}:S_{x}\longrightarrow W_{1}\oplus W_{2}.$$
\begin{defn}
We say that $f$ is \emph{in general position} with respect to $P$ at $x\in M$, if
\begin{enumerate}
  \item $f(x)\notin P$; or
  \item $f(x)\in P$ and for some choice of slice $S_{x}$ and $G_{x}$-chart for $P$ at $f(x)$
  $$\tilde{f}:S_{x}\longrightarrow W_{1}\oplus W_{2}$$
  is in general position with respect to $W_{1}$ at $x$ in the sense of Definition \ref{3}.
\end{enumerate}

If $f$ is in general position with respect to $P$ at every point of $M$, then we say that $f$ is in general position with respect to $P$.
\end{defn}
\begin{rem}
 The above definition is well-defined, since the definition is independent of the choice of the slices (cf. \cite[ Proposition 5.6.]{EB77}).
\end{rem}
If $f$ is in general position with respect to $P$ at $x\in M$, then the definition implies that $f$ is in general position with respect to $P$ at $gx$ for any $g\in G$; furthermore, it is in general position in a neighborhood of $x$.

In classical differential topology the set of smooth maps which are in general position is open and dense.
Similarly, for the $G$-manifolds and equivariant maps we have:
\begin{thm}[Bierstone \cite{EB77}, Theorem 1.4]\label{thm2.2.9}
Suppose that $P$ is an invariant submanifold of $N$, then the set of smooth equivariant maps $f \in \mathcal{C}^{\infty}_{G}(M,N)$ which are in general position with respect to $P$ is a countable intersection of open dense sets in the Whitney of $\mathcal{C}^{\infty}$-topology.
\end{thm}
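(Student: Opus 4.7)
The plan is to reduce the theorem, via the differential slice theorem, to a local stratified transversality statement on slices, and then to apply a parametric Thom-type transversality theorem stratum by stratum to the minimum Whitney stratification of the universal variety $\mathcal{E}$. First, using second countability of $M$ together with the slice theorem, I cover $M$ by a countable, locally finite family $\{U_\alpha\}_{\alpha\in A}$ of $G$-invariant tubes, each diffeomorphic to $G\times_{G_{x_\alpha}}S_{x_\alpha}$. By $G$-equivariance of $f$, general position of $f$ with respect to $P$ on $U_\alpha$ is controlled by the behavior of $f$ on the slice $S_{x_\alpha}$; choosing a $G_{x_\alpha}$-chart $(V,\phi)$ for $P$ at $f(x_\alpha)$ (if $f(x_\alpha)\notin P$ the condition is trivially open on a neighborhood) identifies the local problem with general position of a $G_{x_\alpha}$-equivariant map $\tilde f_\alpha\colon S_{x_\alpha}\to W_1\oplus W_2$ with respect to $W_1$, which by Definition \ref{3} is general position of the second component $\tilde f_{2,\alpha}\colon S_{x_\alpha}\to W_2$ with respect to $0\in W_2$.

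Now I work inside one local chart. Fix polynomial generators $F_1,\ldots,F_k$ of the module $\mathcal{C}^{\infty}_{G_{x_\alpha}}(S_{x_\alpha},W_2)$ and write $\tilde f_{2,\alpha}=\sum_i h_i F_i$ with $h_i$ invariant. The graph map $\Gamma(\tilde f_{2,\alpha})(x)=(x,h_1(x),\ldots,h_k(x))$ must be transverse to each stratum of the minimum Whitney stratification of $\mathcal{E}\subset S_{x_\alpha}\times\mathbb{R}^k$. The crucial structural observation is that a $G$-equivariant perturbation of $f$ pulls back locally to an arbitrary perturbation of the invariant coefficients $(h_1,\ldots,h_k)$, i.e.\ to a translation of the image of $\Gamma$ in the $\mathbb{R}^k$-direction; in particular the evaluation map $(x,t)\mapsto(x,h_1(x)+t_1,\ldots,h_k(x)+t_k)$ is a submersion. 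Applied stratum by stratum, Thom's parametric transversality theorem yields, for each stratum $\Sigma_j\subset\mathcal{E}$ and each compact $G$-invariant core $K_\alpha\subset U_\alpha$, an open dense subset $\mathcal{R}_{\alpha,j}\subset\mathcal{C}^\infty_G(M,N)$ of equivariant maps whose graph map is transverse to $\Sigma_j$ over $K_\alpha$. Density is immediate from the submersion property; openness on a compact set uses the standard stability of transversality to a single smooth stratum, and openness of simultaneous transversality to the full stratification is guaranteed by Whitney's Condition B, which prevents tangencies to lower strata from emerging as one approaches them from higher ones.

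Piecing the local results together, I choose the $K_\alpha$ so that $\bigcup_\alpha K_\alpha=M$; the global set of equivariant maps in general position with respect to $P$ is then $\bigcap_\alpha\bigcap_j \mathcal{R}_{\alpha,j}$, a countable intersection of open dense sets in the Whitney $\mathcal{C}^\infty$-topology, proving the theorem. The main obstacle is the density step: standard Thom transversality applied to $\mathcal{C}^\infty(M,N)$ is not available because perturbations must remain $G$-equivariant, and a priori the equivariance constraint could leave too little freedom to cross a given stratum of $\mathcal{E}$ transversally. The resolution—and precisely the reason for building the universal variety—is that the invariant multipliers $h_i$ parametrize a complete transversal direction to $\mathcal{E}$ via translations in $\mathbb{R}^k$, so the equivariant perturbation space is already rich enough for the parametric transversality argument. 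Controlling the supports of these perturbations using $G$-invariant bump functions supplied by the invariant partition of unity theorem, and passing from openness on each compact $K_\alpha$ to openness in the Whitney (rather than merely compact-open) $\mathcal{C}^\infty$-topology via the local finiteness of the cover, are the remaining technical points.
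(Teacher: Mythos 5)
The paper does not prove this theorem; it is quoted directly from Bierstone's paper, so there is no in-paper proof to compare against. That said, your sketch reconstructs Bierstone's strategy correctly, and it is exactly the strategy the paper redeploys when it proves Theorem~\ref{Main Thm1}: localize via the differential slice theorem, reduce to the slice representation, express the $G_x$-equivariant map through the finitely many polynomial generators and their invariant coefficients $h_1,\dots,h_k$, observe that $(x,t)\mapsto(x,h_1(x)+t_1,\dots,h_k(x)+t_k)$ is a diffeomorphism (hence transverse to everything in $V\times\mathbb{R}^k$), invoke parametric transversality for density, and glue with a $G$-invariant partition of unity. You correctly identify the essential point that makes the equivariant constraint harmless: the invariant multipliers already sweep out a full transversal family, so genericity is available despite the loss of arbitrary smooth perturbations.

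Two technical points deserve tightening. First, the sets $\mathcal{R}_{\alpha,j}$ you define (maps whose graph map is transverse to the single stratum $\Sigma_j$ over $K_\alpha$) need not be open when $\Sigma_j$ is not closed, since a limiting graph can fail transversality by touching $\overline{\Sigma_j}\setminus\Sigma_j$; the correct open set is $\mathcal{R}_\alpha$, the set of maps whose graph is transverse over $K_\alpha$ to the \emph{entire} (locally closed, Condition-A/B) stratification of $\mathcal{E}$, and the residual set is $\bigcap_\alpha\mathcal{R}_\alpha$. You allude to Whitney's condition as the ingredient, but the bookkeeping should reflect that it is simultaneous transversality, not stratumwise transversality to a single stratum, that is open. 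Second, you dismiss the case $f(x_\alpha)\notin P$ as ``trivially open on a neighborhood,'' but other points $y$ in the tube $U_\alpha$ may still land in $P$ and require their own slices and $G_y$-charts; the covering must be chosen fine enough (shrinking the tubes or refining the cover near $f^{-1}(P)$) to ensure each such point is controlled by some chart. Both gaps are standard and fixable, and your overall route is the correct one.
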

Particularly, when $P$ is a closed invariant submanifold, the set of smooth equivariant maps which are in general position admit the openness and density in the Whitney topology.
Furthermore, Bierstone showed that if $f\in\mathcal{C}^{\infty}_{G}(M,N)$ is in general position with respect to $P$, then $f^{-1}(P)\subset M$ is a Whitney object and every stratum of $f^{-1}(P)$ is a $G$-invariant submanifold of $M$.
Note that the proper action of a Lie group on a manifold induces a natural stratification, thus, a natural approach to the transversality problem in equivariant case is to consider the stratumwise transversality of a smooth equivariant map $f:M\longrightarrow N$ with respect to an invariant submanifold $P$ of $N$.
Given a closed subgroup $H$ of $G$ we can assign the following three subspaces of $M$
$$M_{(H)}:=\{x\in M\mid G_{x}\sim H\};$$
$$M^{H}:=\{x\in M\mid G_{x}\supset H\};$$
$$M_{H}:=\{x\in M\mid G_{x}= H\}.$$
In general, $M_{(H)}$, $M^{H}$ and $M_{H}$ are not connected and each connected component is a submanifold of $M$.
In fact, $M^{H}$ is nothing else but the fixed points of $H$ in $M$.
Especially, if $H\subset G$ is compact then these three spaces satisfy the following relation
$$M_{H}=M_{(H)}\cap M^{H}.$$ Since $G$ is a compact Lie group and $H$, being an isotropy subgroup of some points in $M$ is a closed subgroup of $G$, the compactness of $H$ is automatic.
Furthermore, $M_{(H)}$ can be decomposed into the form of
$$M_{(H)}=\bigsqcup_{J\in (H)}M_{J}.$$

The equivariant map $f$ does not always map $M_{(H)}$ into the set $N_{(H)}$ since in general $G_{x}\subseteq G_{f(x)}$, however, $f$ maps $M^{H}$ into $N^{H}$. Let $f_{H}=f\mid_{M_{H}}$, then the image of $f_{H}$ is contained in $N^{H}$ and the stratumwise transversality of $f$ is defined as follows.
\begin{defn}
Let $f\in \mathcal{C}^{\infty}_{G}(M,N)$ and $P\subseteq N$ be a $G$-invariant submanifold.
If for any subgroup $H$ of $G$ the map $f_{H}:M_{H}\longrightarrow N^{H}$ is transverse to the submanifold $P^{H}$ of $N^{H}$, then we say it is a $G$-equivariant map which admits the \emph{stratumwise transversality} with respect to the $G-$invariant submanifold $P$ of $N$.
\end{defn}

The concepts of general position and stratumwise transversality are generalizations of classical transversality in differential topology.
However, the stratumwise transversality is not open, i.e. if $f\in \mathcal{C}^{\infty}_{G}(M,N)$ admits the stratumwise transversality with respect to a $G$-invariant submanifold $P$ of $N$, then a small perturbation of $f$ may break the stratumwise transversality.
In particular, if a map $f\in \mathcal{C}^{\infty}_{G}(M,N)$ is in the general position with respect to a $G$-invariant submanifold $P$ of $N$, then $f$ is stratumwisely transverse to $P$ (cf. \cite[Proposition 6.4.]{EB77}).
In conclusion, we have the following relation for equivariant smooth maps
$$
\{\textmd{classical}\,\, \textmd{transversality}\}\subset\{\textmd{being}\,\, \textmd{in}\,\, \textmd{general}\,\,\textmd{ position}\}\subset\{\textmd{stratumwise}\,\, \textmd{transversality}\}.
$$
\section{Stratified obstruction system of $G$-moduli problem}
In this section we show that for an equivariant vector bundle there exists a family of obstruction bundles and we call it the stratified obstruction system.
The existence of the obstruction system implies that there is no equivariant perturbed section which is transversal to the zero section in general.

Let $G$ be a compact Lie group, and $B$ be a $G$-manifold.
A $G$-vector bundle over $B$ is defined as follows.
\begin{defn}\label{def3.1}
If $B$ is a $G$-manifold, a \emph{$G$-vector bundle} on $B$ is a $G$-space $E$ together with a $G$-equivariant map $\pi:E\rightarrow B$ such that
\begin{enumerate}
  \item $\pi:E\rightarrow B$ is a real vector bundle on $B$;
  \item for any $g\in G$, and $x\in B$ the action $g:E_{x}\rightarrow E_{gx}$ is homomorphism of vector space.
\end{enumerate}
\end{defn}
The above definition implies that
$$g^{-1}:E_{gx}\longrightarrow E_{x}$$
is also a homomorphism of vector spaces such that
$g^{-1}\circ g$ is the identity map on $E_{x}$ and $g\circ g^{-1}$ is the identity map on $E_{gx}$.
Thus $g:E_{x}\longrightarrow E_{gx}$ is an isomorphism of vector spaces.
Two $G$-vector bundles over $B$ are called \emph{$G$-equivalent} if they are equivalent as ordinary vector bundles via a $G$-equivariant bundle map.
Next we describe the appropriate generalization of product bundle in the equivariant case, which gives the local model of $G$-vector bundles.

Assume that $H$ is a closed subgroup of $G$ and $\rho:H\longrightarrow GL(\mathbb{R};k)$ a homomorphism.
For each $H$-space $V$ we denote by $\varepsilon^{\rho}(V)$ the $G$-vector bundle over $G\times_{H}V$ with fibre $\mathbb{R}^{k}$ given by
\begin{equation}
\pi:G\times_{H}(V\times\mathbb{R}^{k})\longrightarrow G\times_{H}V,\quad\pi([g,(v,e)])=[g,v]
\end{equation}
where $H$ acts on the fibre $\mathbb{R}^{k}$ via the homomorphism $\rho$.
Generally, for any $G$-space $X$ suppose that $H$ is a closed subgroup of $G$ and $V\subset X$ is a $H$-invariant subspace, then $V$ is called a \emph{$H$-slice} provided that the equivariant map
\begin{equation}
\mu:G\times_{H}V\longrightarrow X,\qquad\mu([g,v])=gv
\end{equation}
is a homomorphism onto an open subset of $X$.
\begin{defn}[Lashof \cite{RKL82}]
A $G$-vector bundle $\pi:E\longrightarrow B$ of rank $k$ is called \emph{$G$-locally trivial} if there exists a $G$-invariant open cover $\{GV_{\alpha}\}_{\alpha\in I}$ of $B$, where $V_{\alpha}$ is an $H_{\alpha}$-slice, such that the restriction $E\mid_{GV_{\alpha}}$ is $G$-equivalent to $\varepsilon^{\rho_{\alpha}}(V_{\alpha})$ for some homomorphism $\rho_{\alpha}:H_{\alpha}\longrightarrow GL(\mathbb{R};k)$ (under the identification $\mu:G\times_{H}V_{\alpha}\longrightarrow GV_{\alpha}$).
\end{defn}
In particular, every smooth $G$-vector bundle is $G$-locally trivial (cf. \cite[Corollary 1.6]{RKL82}).

According to Definition \ref{def3.1} (2), for any $x\in B$ there is a representation of the isotropy subgroup $G_{x}$ over the fibre $E_{x}$, therefore, $E_{x}$ is a $G_{x}$-vector space.
Denote by $E^{f}_{x}$ the $G_{x}$-fixed subspace of the fiber space $E_{x}$, i.e.
$$E^{f}_{x}=\{v\in E_{x}\mid gv=v, \forall g\in G_{x}\}.$$
Given a closed subgroup $H$ of $G$, let
$$B_{(H)}=\{x\in B| G_{x}\sim H\},$$
then we have:
\begin{prop}\label{3.0.13}
If $x$ and $y$ are contained in the same connected component of $B_{(H)}$, then $\emph{dim}\,E^{f}_{x}= \emph{dim}\,E^{f}_{y}$.
\end{prop}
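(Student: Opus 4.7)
The plan is to establish local constancy of the function $y \mapsto \dim E^{f}_{y}$ on $B_{(H)}$: since a locally constant integer-valued function on a locally connected space is constant on connected components, this will suffice.

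First I would reduce to a pointwise analysis using equivariance. If $y \in B_{(H)}$ and $g \in G$, then $g: E_{y} \to E_{gy}$ is a linear isomorphism and conjugates the $G_{y}$-action on $E_{y}$ to the $G_{gy} = gG_{y}g^{-1}$-action on $E_{gy}$; hence it carries $E^{f}_{y}$ isomorphically onto $E^{f}_{gy}$. So the dimension function is $G$-invariant on $B_{(H)}$, and it is enough to prove local constancy near an arbitrary point $x$ with $G_{x} = H$ (after conjugating $H$ within its class).

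The heart of the argument is to combine the differential slice theorem with $G$-local triviality of $E$. Pick a slice $S$ at $x$, so that $U := GS$ is an invariant neighborhood of the orbit $G(x)$ and there is a $G$-equivariant diffeomorphism $G \times_{H} S \cong U$. After shrinking $S$, the bundle becomes $G$-trivialized: $E|_{U} \cong G \times_{H}(S \times V)$ for some $H$-representation $\rho: H \to GL(V)$ with $V \cong \mathbb{R}^{k}$. I then claim that $U \cap B_{(H)} = G \cdot S^{H}$. Indeed, for $[g,s] \in G \times_{H} S$ one has $G_{[g,s]} = g G_{s} g^{-1}$, where by the slice property $G_{s} \subseteq H$; demanding $G_{[g,s]} \sim H$ forces $G_{s}$ to be a closed subgroup of the compact Lie group $H$ conjugate to $H$ in $G$, which (since conjugation preserves dimension and number of components) forces $G_{s} = H$, i.e. $s \in S^{H}$. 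For such $s$, the fiber $E_{[g,s]} \cong gV$ has isotropy action $gHg^{-1}$ conjugate to the original $\rho$ via $g$, so its fixed subspace is $g \cdot V^{H}$, a vector subspace of fixed dimension $\dim V^{H}$. Therefore $\dim E^{f}_{y} = \dim V^{H}$ for every $y \in U \cap B_{(H)}$.

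This proves local constancy of the dimension function on $B_{(H)}$. Since each connected component of $B_{(H)}$ is path-connected (it is a manifold, by the orbit-type stratification) and since a locally constant $\mathbb{Z}$-valued function on a connected space is constant, the result follows. The main technical obstacle is simply the careful verification that within the slice neighborhood the points of $B_{(H)}$ are exactly $G \cdot S^{H}$; everything else reduces to the standard local model of a $G$-vector bundle.
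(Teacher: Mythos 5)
Your proof is correct and rests on the same geometric ingredients as the paper's argument: the differential slice theorem, $G$-local triviality of the bundle, and the fact that a closed subgroup of a compact Lie group that is both contained in $H$ and conjugate to $H$ must equal $H$ (the paper's Lemma \ref{3.0.14}). You package the argument as ``local constancy $+$ connectedness $\Rightarrow$ constant,'' while the paper reaches the same conclusion by explicitly chaining together finitely many slice neighborhoods along a path from $x$ to $y$ and checking three cases; this is a difference of presentation rather than of substance.
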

To prove the proposition, we need the following lemma (cf. \cite[Lemma 6.12.]{PWM08}):
\begin{lem}\label{3.0.14}
Let $G$ be a compact Lie group, and $H$ a closed subgroup of $G$, then $gHg^{-1}\subset H$ implies that $gHg^{-1}=H$.
\end{lem}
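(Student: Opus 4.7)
The plan is to reduce the statement to the standard fact that an injective continuous endomorphism of a compact Lie group is surjective. Conjugation by $g$ gives a continuous group homomorphism $c_g\colon G\to G$, $x\mapsto gxg^{-1}$, which by the hypothesis $gHg^{-1}\subset H$ restricts to an injective continuous homomorphism $c_g|_H\colon H\to H$. Since $H$ is closed in the compact group $G$, $H$ is itself a compact Lie group, and it suffices to show that the image $c_g(H)=gHg^{-1}$ equals $H$.

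I would split the argument into an identity-component part and a $\pi_0$-part. For the first, observe that $c_g$ is a diffeomorphism of $G$ onto itself, so $c_g|_H$ is a diffeomorphism from $H$ onto $gHg^{-1}$; in particular $\dim(gHg^{-1})=\dim H$. A closed subgroup of a Lie group of full dimension has the same Lie algebra and hence the same identity component, so $gHg^{-1}$ is open in $H$ and $(gHg^{-1})_0=H_0$. Consequently every connected component of $gHg^{-1}$, being a coset of $H_0$ inside $H$, is a full connected component of $H$.

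For the second part, the finite group $\pi_0(H)=H/H_0$ is available because $H$ is a compact Lie group. The diffeomorphism $c_g|_H$ descends to a bijection $\pi_0(H)\to\pi_0(gHg^{-1})$, while the inclusion $gHg^{-1}\hookrightarrow H$ induces a map $\pi_0(gHg^{-1})\to\pi_0(H)$ that is injective by the component observation above. The composition is an injective self-map of the finite set $\pi_0(H)$, hence a bijection, which forces the inclusion on $\pi_0$ to be surjective. Combined with the equality of identity components, this yields $gHg^{-1}=H$.

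I do not foresee a real obstacle; the only two points to handle with care are that a closed subgroup of a Lie group of full dimension must agree with the ambient group on the identity component (a standard Lie-algebra argument via the exponential map), and that $\pi_0(H)$ is finite, which follows from the compactness of the Lie group $H$. Both are classical, and their combination is exactly what turns the set-theoretic inclusion $gHg^{-1}\subset H$ into an equality.
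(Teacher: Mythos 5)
Your proof is correct. Note first that the paper does not actually prove this lemma: it is quoted verbatim from Michor's \emph{Topics in differential geometry} (Lemma 6.12 in \cite{PWM08}) and used as a black box in the proof of Proposition \ref{3.0.13}, so there is no in-paper argument to compare against. Your argument is sound as written: $gHg^{-1}$ is compact, hence a closed subgroup of the compact Lie group $H$ of full dimension, hence open in $H$ with $(gHg^{-1})_0=H_0$; and the pigeonhole argument on the finite set $\pi_0(H)$ (injective self-map of a finite set is bijective) correctly upgrades the inclusion to an equality. The two facts you flag as needing care are indeed the only nontrivial inputs, and both are standard.

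For comparison, the usual textbook proof (and the one behind the cited reference) is purely topological-group-theoretic and avoids Lie algebras and $\pi_0$ altogether: iterating the hypothesis gives $g^nHg^{-n}\subset H$ for all $n\geq 1$; since $G$ is compact, the closure of $\{g^n\mid n\geq 1\}$ is a compact subgroup of $G$ containing $g^{-1}$, so $g^{-1}=\lim_k g^{n_k}$ for some subsequence, and then for $h\in H$ one has $g^{-1}hg=\lim_k g^{n_k}hg^{-n_k}\in H$ because $H$ is closed; this yields $H\subset gHg^{-1}$ and hence equality. That route buys a statement valid for any compact topological group and any closed subgroup, with no smoothness used, whereas your route exploits the Lie structure (dimension count plus finiteness of $\pi_0$) and is arguably more transparent about \emph{why} the inclusion cannot be proper: it cannot drop dimension, and it cannot drop components.
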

\begin{proof}[Proof of Proposition \ref{3.0.13}]
Without loss of generality, we may assume that $B_{(H)}$ is connected.
Observe that $E|_{B_{(H)}}$ is a smooth $G$-vector bundle, thus it admits the $G$-local trivialization.
Since $B_{(H)}$ is connected given any smooth path
\footnote{Recall that a connected space is not always pathwise connected.
However, if a space is connected and locally pathwise connected then it is pathwise connected.
A connected manifold is always pathwise connected since the manifold is locally pathwise connected.}
$$\gamma:[0,1]\longrightarrow B_{(H)}$$
such that $\gamma(0)=x$ and $\gamma(1)=y$, we can find finitely many points on the path $\gamma$
$$x_{0}=x,x_{1},\cdot\cdot\cdot,x_{m}=y$$
with associated slices $V_{i}$ $(0\leq i\leq m)$ such that the $G$-invariant open subsets $\{GV_{i}\}^{m}_{i=0}$ cover the path $\gamma$, and $GV_{i}\cap GV_{i+1}\neq\emptyset$ for $0\leq i\leq m-1$.
In further, we can choose $GV_{i}$ such that for each $GV_{i}$, $\pi^{-1}(GV_{i})$ is $G$-equivalent to
$$\varepsilon^{\rho_{i}}(V_{i})=G\times_{H_{i}}(V_{i}\times \mathbb{R}^{k}),$$
where $H_{i}=G_{x_{i}}$ and $\rho_{i}:H_{i}\longrightarrow GL(\mathbb{R};k)$ is a homomorphism.
The next thing to do in the proof is to verify that for any $z\in GV_{i}$, we have $\textmd{dim}\,E^{f}_{z}=\textmd{dim}\,E^{f}_{x_{i}}$.

We divide the proof into three cases.

\paragraph{\textbf{Case 1}} Assume that $z\in V_{i}$, then $G_{z}\subset H_{i}$.
According to the definition of $B_{(H)}$, we get that $G_{z}$ and $G_{x_{i}}=H_{i}$ are conjugate to $H$. Since the conjugate relation is an equivalence relation, by the transitivity $G_{z}$ is conjugate to $H_{i}$, i.e. there exists a $g\in G$ such that $G_{z}=gH_{i}g^{-1}\subset H_{i}$.
By Lemma \ref{3.0.14}, we get $G_{z}=H_{i}$.
Furthermore, the representation of $G_{z}$ over the fibre $E_{z}$ is equivalent to $\rho_{i}|_{G_{z}}=\rho_{i}$.
It follows that the fixed subspaces $E^{f}_{z}$ and $E^{f}_{x_{i}}$ have the same dimension, i.e. $\textmd{dim}\,E^{f}_{z}=\textmd{dim}\,E^{f}_{x_{i}}$.

\paragraph{\textbf{Case 2}} Assume that $z\in G(x_{i})$, i.e. there exists a $g\in G$ such that $z=gx_{i}$.
Note that $z=gx_{i}$ implies that $G_{z}=gG_{x_{i}}g^{-1}$ and the map
\begin{equation}\label{4}
g:E_{x_{i}}\longrightarrow E_{z}
\end{equation}
is an isomorphism of vector spaces.
By definition we have:
$$E^{f}_{x_{i}}=\biggl\{u\in E_{x_{i}}\mid hu=u, \forall h\in G_{x_{i}}\biggr\}$$
and
\begin{eqnarray*}
E^{f}_{z}&=&\biggl\{v\in E_{z}\mid \tilde{g}v=v, \forall \tilde{g}\in G_{z}\biggr\}\\
&=&\biggl\{v\in E_{z}\mid \tilde{g}v=v, \forall \tilde{g}\in gG_{x_{i}}g^{-1}\biggr\}\quad\quad(G_{z}=gG_{x_{i}}g^{-1})\\
&=&\biggl\{v\in E_{z}\mid (ghg^{-1})v=v, \forall h\in G_{x_{i}}\biggr\}.
\end{eqnarray*}
For any $v\in E^{f}_{z}$, we have $g^{-1}v\in E_{x_{i}}$ and $h(g^{-1}v)=g^{-1}v$, for all $h\in G_{x_{i}}$.
It follows that the homomorphism of vector spaces
\begin{equation}
g^{-1}:E_{z}\longrightarrow E_{x_{i}}
\end{equation}
maps $E^{f}_{z}$ onto the subspace $E^{f}_{x_{i}}$.
It is straightforward to show that the homomorphism (\ref{4})
maps $E^{f}_{x_{i}}$ onto the subspace $E^{f}_{z}$.
So $E^{f}_{x_{i}}\cong E^{f}_{z}$ and we obtain $\textmd{dim}\,E^{f}_{z}= \textmd{dim}\,E^{f}_{x_{i}}$.

\paragraph{\textbf{Case 3}} Assume that $z\in GV_{i}$ and $z\notin V_{i}$.
Then there exists a $v\in V_{i}$ such that $z=gv$ for some $g\in G$.
On one hand, due to the result of Case 1 we have
\begin{equation}\label{5}
\textmd{dim}\,E^{f}_{v}=\textmd{dim}\,E^{f}_{x_{i}}.
\end{equation}
On the other hand from the result of Case 2 we get
\begin{equation}\label{6}
\textmd{dim}\,E^{f}_{v}=\textmd{dim}\,E^{f}_{z}.
\end{equation}
At last, according to (\ref{5}) and (\ref{6}) we obtain
\begin{equation}
\textmd{dim}\,E^{f}_{z}=\textmd{dim}\,E^{f}_{x_{i}}.
\end{equation}

Note that the path $\gamma$ is covered by the $G$-invariant subsets $\{GV_{i}\}^{m}_{i=0}$ and for each $0\leq i\leq m-1$ $GV_{i}\cap GV_{i+1}\neq \emptyset$, and therefore, we have
$$\textmd{dim}\,E^{f}_{x}=\textmd{dim}\,E^{f}_{x_{1}}=\cdot\cdot\cdot=\textmd{dim}\,E^{f}_{x_{m-1}}=\textmd{dim}\,E^{f}_{y}.$$
This completes the proof.
\end{proof}
\begin{defn}
(Finite dimensional $G$-moduli problem) Let $G$ be an oriented compact Lie group.
A \emph{finite dimensional $G$-moduli problem} is a triple $(B,E,S)$ with the following properties:
\begin{itemize}
  \item $B$ is a compact smooth manifold (without boundary) on which $G$ acts smoothly;
  \item $E$ is a $G$-vector bundle over $B$;
  \item $S:B\longrightarrow E$ is a $G-$equivariant smooth section.
\end{itemize}
A $G$-moduli problem $(B,E,S)$ is \emph{oriented} if $B$ and $E$ are oriented and $G$ acts on $B$ and $E$ by orientation preserving diffeomorphisms.
\end{defn}
In order to make our notation manageable, from now on we assume that $B_{(H)}$ is connected for each closed subgroup $H$ of $G$.
In the general case we may consider it component by component.
From Proposition \ref{3.0.13}, for any $x,y\in B_{(H)}$ we have $\textmd{dim}\,E^{f}_{x}=\textmd{dim}\,E^{f}_{y}$, i.e. the dimension of $E^{f}_{x}$ is independent of the choice of $x\in B_{(H)}$. Assume that $\textmd{rank}\,E=k$ and $\textmd{dim}\,E^{f}_{x}=l$, let
$$\mathcal{F}=\{E^{f}_{x}\}_{x\in B_{(H)}},$$
then the collection $\mathcal{F}$ is an $G$-invariant subspace of $E\mid_{B_{(H)}}$.
Using the local trivialization of $E\mid_{B_{(H)}}$, we get that for every $x\in B_{(H)}$, there exists a neighborhood $U$ of $x$ in $B_{(H)}$ and a trivialization
\begin{equation}
\phi_{U}:E|_{U}\longrightarrow U\times \mathbb{R}^{k}.
\end{equation}
Moreover, the restriction of $\phi_{U}$ on $\mathcal{F}\mid_{U}$ induces a map
\begin{equation}
\tilde{\phi}_{U}:\mathcal{F}|_{U}\longrightarrow U\times R^{l}\subset U\times \mathbb{R}^{k},
\end{equation}
which gives a local trivialization of $\mathcal{F}$ over $U$.
For any pair of trivializations $\phi_{U}$ and $\phi_{V}$ of $E\mid_{B_{(H)}}$ we have the smooth transition functions
\begin{equation}
g_{UV}:U\cap V\longrightarrow GL(\mathbb{R};k)
\end{equation}
given by
$$g_{UV}(x)=(\phi_{U}\circ\phi^{-1}_{V})|_{\{x\}\times \mathbb{R}^{k}}$$
and satisfying the cocycle condition:
\begin{equation}\label{7}
g_{UV}\cdot g_{VW}=g_{UW}\quad(U\cap V\cap W\neq\emptyset).
\end{equation}
For any $x\in U\cap V$ the fibre $E_{x}$ is a $G_{x}$-vector space.
For each $e\in E_{x}$ under the trivialization $\phi_{U}$ we get $\phi_{U}(e)\in \mathbb{R}^{k}$, and similarly, under the trivialization $\phi_{V}$, we have $\phi_{V}(e)\in \mathbb{R}^{k}$.
Consider the action of $g\in G_{x}$ on $e$.
Under the different trivializations $\phi_{U}$ and $\phi_{V}$ we get:
\begin{equation}\label{8}
  \phi^{-1}_{U}\cdot T_{U}\cdot \phi_{U}(e)=ge,
\end{equation}
\begin{equation}\label{9}
  \phi^{-1}_{V}\cdot T_{V}\cdot \phi_{V}(e)=ge,
\end{equation}
where $T_{U}:=(\phi_{U}\cdot g\cdot\phi^{-1}_{U})|_{x}$ and $T_{V}:=(\phi_{V}\cdot g\cdot\phi^{-1}_{V})|_{x}$.

According to (\ref{8}) and (\ref{9}), we get
\begin{equation}\label{10}
  \phi_{U}\circ\phi^{-1}_{V}\cdot T_{V}\cdot \phi_{V}(e)=T_{U}\cdot \phi_{U}(e).
\end{equation}
Since $g_{UV}=\phi_{U}\circ\phi^{-1}_{V}$ and $\phi_{U}(e)=g_{UV}(x)\circ\phi_{V}(e)$, (\ref{10}) is equivalent to
\begin{equation}
  g_{UV}(x)\circ T_{V}\circ\phi_{V}(e)=T_{U}\circ g_{UV}(x)\circ\phi_{V}(e).
\end{equation}
This implies that the action of $G_{x}$ on $E_{x}$ is independent of the trivialization.
Thus $g_{UV}$ induces a transition function
\begin{equation}
\tilde{g}_{UV}:U\cap V\longrightarrow GL(\mathbb{R};l)
\end{equation}
given by
$$\tilde{g}_{UV}(x)=(\tilde{\phi}_{U}\circ\tilde{\phi}^{-1}_{V})|_{\{x\}\times \mathbb{R}^{l}}.$$
The cocycle condition of transition functions $\{\tilde{g}_{UV}\}$ is determined by (\ref{7}).
This implies that the collection $\mathcal{F}$ with the cocycle $\{\tilde{g}_{UV}\}$ forms a subbundle of $E\mid_{B_{(H)}}$, denoted by $\mathcal{E}_{H}$.
Since the projection $\pi:\mathcal{E}_{H}\longrightarrow B_{(H)}$ is $G$-equivariant and for any $g\in G$ and $x\in B_{(H)}$ the action
\begin{equation}
g:E^{f}_{x}\longrightarrow E^{f}_{gx}
\end{equation}
is a homomorphism of vector spaces, $\mathcal{E}_{H}$ is a $G$-equivariant subbundle.
Observe that the section $S:B\longrightarrow E$ is equivariant, for any $x\in B$ and $g\in G_{x}$ we have
$$
g(S(x))=S(gx)=S(x).
$$
This follows that $S(x)$ is contained in the $G_{x}$-fixed subspace $E^{f}_{x}\subset E_{x}$.
Therefore $S_{H}:=S\mid_{B_{(H)}}$ is an equivariant smooth section of $\mathcal{E}_{H}$; moreover we say that the triple $(B_{(H)},\mathcal{E}_{H},S_{H})$ is the \emph{fixed subbundle} of $E\mid_{B_{(H)}}$ with the induced $G$-equivariant smooth section.
\begin{defn}
(Partition of $G$-moduli problem) The family of the fixed subbundles
$$\{(B_{(H)},\mathcal{E}_{H},S_{H})\mid H<G\}$$
 is called the \emph{partition of $(B,E,S)$}. Also we write
 $$(B,E,S)=\bigsqcup_{(H)}(B_{(H)},\mathcal{E}_{H},S_{H})$$
 where $(H)$ runs over the all isotropy classes.
\end{defn}
Define $\mathcal{O}_{H}$ be the quotient bundle of $\mathcal{E}_{H}$, then it is also a $G$-equivariant bundle and we have a direct sum decomposition of $G$-vector bundles
$$E\mid_{B_{(H)}}=\mathcal{E}_{H}\oplus \mathcal{O}_{H}.$$
In particular, we say that the vector bundle $\mathfrak{o}_{H}:\mathcal{O}_{H}\longrightarrow B_{(H)}$
is the \emph{obstruction bundle} of $E\mid_{B_{(H)}}$.
\begin{defn}
(Obstruction system of $G$-moduli problem) The family of the obstruction bundles
$$\{(B_{(H)},\mathcal{O}_{H},\mathfrak{o}_{H})\mid H<G\}$$
is called the \emph{obstruction system} of $(B,E,S)$.
\end{defn}
\begin{defn}
(Coindex of $G$-vector bundle) The \emph{coindex} of $G$-vector bundle $\pi:E\longrightarrow B$ is defined to be the integer
$$\textmd{coind}(B,E)=\max_{H<G, H\neq e}\{\textmd{codim}B_{(H)}-\textmd{rank}\mathcal{O}_{H}\},$$
where $e$ is the identity of $G$. The coindex is uniquely determined by the $G$-actions on $B$ and $E$.
\end{defn}
Suppose that the $G$-moduli problem $(B,E,S)$ is oriented, then the orientation on $E$ determines an orientation on each fibre $E_{x}$.
In particular, the induced orientation on $E_{x}$ is preserved by the $G_{x}$-action on $E_{x}$.
Let $E^{m}_{x}=E_{x}/E^{f}_{x}$ be the quotient space of $E^{f}_{x}$, i.e. the moving part under the $G_{x}$-action, then each fibre $E_{x}$ can be decomposed into the direct sum of $G_{x}$-subspaces as follows
\begin{equation}
E_{x}=E^{f}_{x}\oplus E^{m}_{x}.
\end{equation}
In fact, the moving subspace $E^{m}_{x}$ is the fibre of obstruction bundle $\mathcal{O}_{H}$ at $x\in B_{(H)}$.
The orientation on $E_{x}$ induce the orientations on $E^{f}_{x}$ and $E^{m}_{x}$ which are preserved by the $G_{x}$-action.
Therefore we assign to each fibre of fixed subbundle $\mathcal{E}_{H}$ an induced orientation.
If this induced orientation is smooth, then $\mathcal{E}_{H}$ is \emph{oriented}.
In particular, the obstruction bundle $$\mathfrak{o}_{H}:\mathcal{O}_{H}\longrightarrow B_{(H)}$$
is \emph{oriented} if and only if $\mathcal{E}_{H}$ is oriented.
We say that the obstruction system of an oriented $G$-moduli problem is \emph{oriented}, if each obstruction bundle is oriented.
Next we consider the transversality of the partition of $G$-moduli problem.
We say that the partition $\{(B_{(H)},\mathcal{E}_{H},S_{H})\mid H<G\}$ is \emph{transversal}, if for each $H$ of $G$ the section $S_{H}$ is transverse to the zero section of $\mathcal{E}_{H}$.

\begin{prop}\label{3.0.19}
Let $(B,E,S)$ be a $G$-moduli problem.
If $S:B\longrightarrow E$ is in general position with respect to the zero section over $B$, then the partition
$$\{(B_{(H)},\mathcal{E}_{H},S_{H})\mid H<G\}$$
is transversal.
\end{prop}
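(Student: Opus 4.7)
My plan is to deduce the proposition by combining two ingredients: the inclusion of equivariant transversality notions recorded at the end of Section~2 (general position $\Rightarrow$ stratumwise transversality), and a pointwise identification between the fixed subbundle $\mathcal{E}_H$ and the ordinary fixed-point bundle $E^J$ for a representative $J\in(H)$.

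First, because $S\in\mathcal{C}^{\infty}_G(B,E)$ is in general position with respect to the zero section $Z\subset E$, the cited Bierstone inclusion gives stratumwise transversality of $S$ with respect to $Z$. Concretely, for every closed subgroup $J<G$ the restriction
$$S|_{B_J}\colon B_J\longrightarrow E^J$$
is transverse to $Z^J$, the zero section of the $J$-fixed bundle $E^J\to B^J$.

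Next I would identify $\mathcal{E}_H$ with $E^J$ piecewise. Recalling that $B_{(H)}=\bigsqcup_{J\in(H)}B_J$, fix $J\in(H)$ and $x\in B_J$, so that $G_x=J$. The fiber of $\mathcal{E}_H$ at $x$ is $E_x^f=E_x^J$ by construction, which is precisely the fiber of $E^J$ at $x$. By the differential slice theorem together with upper semi-continuity of the stabilizer, $B_J$ is open in the fixed-point submanifold $B^J$, and therefore $\mathcal{E}_H|_{B_J}=E^J|_{B_J}$ as $G$-equivariant vector bundles.

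Finally, at any $x\in B_J$ with $S(x)=0$, both transversality conditions linearize to the same assertion: the projection of $dS(x)\colon T_xB^J\to T_{(x,0)}E^J$ onto the fiber $E_x^J$ is surjective. Since $T_xB_J=T_xB^J$ by the openness of $B_J$ in $B^J$, the stratumwise transversality from the first step supplies exactly this surjectivity. Hence $S_H=S|_{B_{(H)}}$ is transverse to the zero section of $\mathcal{E}_H$ on every $B_J\subset B_{(H)}$, and thus on all of $B_{(H)}$; letting $(H)$ range over isotropy classes completes the proof. The only point requiring genuine care is the identification $\mathcal{E}_H|_{B_J}=E^J|_{B_J}$ together with the openness of $B_J\subset B^J$; once this bookkeeping is in place, the proposition reduces to the implication already quoted in Section~2.
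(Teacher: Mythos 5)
Your proposal is correct and follows essentially the same route as the paper: both arguments invoke the Bierstone implication (general position implies stratumwise transversality), linearize at a zero $b\in B_J\cap S^{-1}(0)$ to get surjectivity of the composite $T_bB_J\to T_bE^J\to E^f_b$, and then pass to all of $B_{(H)}$. The paper spells out the tangent-space computation $T_b E^J = T_b B^J\oplus(E_b)^J$ and notes explicitly that $T_b B_J\subset T_bB_{(H)}$ already gives the needed surjectivity for $S_H$, whereas you route through the openness of $B_J$ in $B^J$ and the fiberwise identification $\mathcal{E}_H|_{B_J}=E^J|_{B_J}$; these are equivalent bookkeeping choices and your argument is sound.
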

\begin{proof} Notice that $E$ is a $G$-vector bundle over $B$, and therefore $B$ can be considered as an embedded $G$-invariant closed submanifold in $E$.
Given any closed subgroup $H$ of $G$, let $J\in (H)$.
For any $b\in B$ assume that $J\subset G_{b}$ and define $(E_{b})^{J}$ as the $J$-fixed subspace of $E_{b}$, i.e.
$$(E_{b})^{J}=\{e\in E_{b}| ge=e,\forall g\in J\}.$$
Consider the $J$-fixed submanifold of $E$:
\begin{eqnarray*}
E^{J}&=&\{(b,e)\in E| g(b,e)=(b,e),\forall g\in J\}\\
&=&\{(b,e)\in E| gb=b,ge=e,\forall g\in J\}\\
&=&\{(b,e)\in E| b\in B^{J},e\in (E_{b})^{J}\}\\
&=&\bigsqcup_{b\in B^{J}}(E_{b})^{J}.
\end{eqnarray*}
Observe that the section $S:B\longrightarrow E$ is in general position with respect to the zero section over $B$, and hence it admits the stratumwise transversality, i.e.
\begin{equation}
S|_{B_{J}}:B_{J}\longrightarrow E^{J}
\end{equation}
is transverse to the $J$-fixed submanifold $B^{J}\subset E^{J}$.
For each $b\in S^{-1}(0)\cap B_{J}$ we have $G_{b}=J$ and
\begin{eqnarray*}
dS(b)(T_{b}(B_{J}))+T_{b}(B^{J})&=&T_{b}(E^{J})\\
&=&T_{b}(B^{J})\oplus (E_{b})^{J}\\
&=&T_{b}(B^{J})\oplus (E_{b})^{G_{b}}\quad(G_{b}=J)\\
&=&T_{b}(B^{J})\oplus E^{f}_{b}.\quad\quad\,\,((E_{b})^{G_{b}}=E^{f}_{b})
\end{eqnarray*}
It follows that the linear map

\begin{equation}\label{11}
\xymatrix@C=0.5cm{
    & \delta S(b):T_{b}(B_{J}) \ar[rr]^{dS(b)} && T_{b}(B^{J})\oplus E^{f}_{b} \ar[rr]^{\quad\,\:\: proj} && E^{f}_{b}  &  }
\end{equation}
is surjective.
Consider the $H$-fixed subbundle $(B_{(H)},\mathcal{E}_{H},S_{H})$.
In order to verify that the section
\begin{equation}
S_{H}:B_{(H)}\longrightarrow \mathcal{E}_{H}
\end{equation}
is transversal to the zero section, we only need to show that for each $b\in S^{-1}(0)\cap B_{(H)}$ the vertical differential
\begin{equation}
\xymatrix@C=0.5cm{
    & \delta S(b):T_{b}B_{(H)} \ar[rr]^{\quad dS(b)} && T_{b}(\mathcal{E}_{H}) \ar[rr]^{\quad\, proj} && E^{f}_{b}  &  }
\end{equation}
is surjective.
Note that $B_{(H)}=\sqcup_{J\in (H)}B_{J}$, for any $b\in S^{-1}(0)\cap B_{(H)}$ there exists a $B_{J}$ such that $b\in S^{-1}(0)\cap B_{J}$.
The stratumwise transversality of $S$ implies that the map (\ref{11})
is surjective.
Observe that $T_{b}(B_{J})$ is a tangent subspace of $T_{b}B_{(H)}$, and therefore the vertical differential
$$\delta S(b):T_{b}B_{(H)}\longrightarrow E^{f}_{b}$$
is surjective.
It follows that for any $H\subset G$ the section
$$S_{H}:B_{(H)}\longrightarrow \mathcal{E}_{H}$$
is transverse to the zero section; moreover, the partition
$$\{(B_{(H)},\mathcal{E}_{H},S_{H})\mid H<G\}$$ is transversal.
\end{proof}

\section{Invariant Euler cycle of $G$-moduli problem }
In this section we give the proof of Theorem \ref{Main Thm1}.

\begin{proof}[Proof of Theorem \ref{Main Thm1}]
Our first goal is to show that there exists a $G$-equivariant perturbation $P:B\longrightarrow E$ supported in a $G$-invariant open neighborhood of $S^{-1}(0)$ such that $S+P$ is in general position with respect to the zero section over $B$.
The idea of the proof is canonical.
We can reduce the problem to the local situation and construct a local equivariant perturbation.
Then using the $G$-invariant partition of unity we can glue those local perturbations to get a global one.

For any $x\in S^{-1}(0)$, let $S_{x}$ be the slice at $x$.
From the differential slice theorem, there exists a triple $(U,\phi,G_{x})$ satisfying the following properties:
\begin{enumerate}
  \item $U\subset S_{x}$ is a $G_{x}$-invariant open neighborhood of zero in the $G_{x}$-vector space $S_{x}$.
  \item $\phi:U\longrightarrow B$ is a $G_{x}$-equivariant embedding such that $x=\phi(0)$.
  \item $\phi$ induces a $G$-equivariant diffeomorphism form $G\times_{G_{x}}U$ onto a $G$-invariant open neighborhood of $x$ in $B$, denoted by $W$, as follows
  $$\Phi:[g,y]\longmapsto g\cdot\phi(y)$$
   where $[g,y]\in G\times_{G_{x}}U$ is the equivalence class determined by equivalence relation
   $$(g,y)\sim(h^{-1}\cdot g,h\cdot y),\,\forall h\in G_{x}.$$
\end{enumerate}
Since $S^{-1}(0)\subset B$ is compact we can choose finitely many points $x_{i}\in S^{-1}(0)\,(0\leq i\leq q)$ with triples $(U_{i},\phi_{i},H_{i})$ and induced maps $\Phi_{i}$ such that
$$S^{-1}(0)\subset \bigcup^{q}_{i=0}W_{i},$$
where $H_{i}$ is the isotropy subgroup of $x_{i}$ and $W_{i}$ is the $G$-invariant open neighborhood of $x_{i}$ in $B$ determined by the image of $\Phi_{i}$. Assume that $E_{i}$ is the fibre of $E$ at $x_{i}$. Since $U_{i}$ is a contractible neighborhood of zero in $S_{i}$ there exists an $H_{i}$-equivariant trivialization of the pullback bundle $\phi^{*}_{i}E=U_{i}\times E_{i}$.

Given that $U_{i}$ and $E_{i}$ are $H_{i}$-vector spaces the space of $H_{i}$-equivariant smooth maps $\mathcal{C}^{\infty}_{H_{i}}(U_{i},E_{i})$ is a $\mathcal{C}^{\infty}_{H_{i}}(U_{i})$-module with finite polynomial generators.
Suppose that
$$F_{1},\,F_{2},\cdot\cdot\cdot,F_{r_{i}}$$
are the generators of $\mathcal{C}^{\infty}_{H_{i}}(U_{i},E_{i})$.
Since a $G$-equivariant map on $W_{i}$ is uniquely determined by its restriction to $U_{i}$ the local section $S|_{W_{i}}$ is uniquely determined by a $H_{i}$-equivariant map $\tilde{S}_{i}\in \mathcal{C}^{\infty}_{H_{i}}(U_{i},E_{i})$.
There exists a unique set of $H_{i}$-invariant smooth functions
$$\textbf{h}=(h_{1},\cdot\cdot\cdot,h_{r_{i}})\in \mathcal{C}^{\infty}_{H_{i}}(U_{i})^{r_{i}}$$
such that
\begin{equation}
\tilde{S}_{i}=\sum^{r_{i}}_{j=1}h_{j}F_{j}.
\end{equation}
The graph map of $\tilde{S}_{i}$ is
\begin{equation}\label{12}
\Gamma(\tilde{S}_{i}):U_{i}\longrightarrow U_{i}\times \mathbb{R}^{r_{i}},x\longmapsto(x,\textbf{h}(x))
\end{equation}
and the universal variety is
$$\mathcal{E}_{i}=\{(x,t)\in U_{i}\times \mathbb{R}^{r_{i}}\mid \sum^{r_{i}}_{j=1}t_{j}F_{j}(x)=0\}.$$
From Definition \ref{def2.2.2}, $\tilde{S}_{i}$ is in general position if and only if (\ref{12}) is transverse to $\mathcal{E}_{i}$ (every stratum of $\mathcal{E}_{i}$) in $U_{i}\times \mathbb{R}^{r_{i}}$.
Given any $\textbf{c}=(c_{1},\cdot\cdot\cdot,c_{r_{i}})\in\mathbb{R}^{r_{i}}$ we can make a perturbation of the graph map (\ref{12}) as follows:
\begin{equation}\label{13}
  x\longmapsto(x,h_{1}(x)+c_{1},\cdot\cdot\cdot,h_{r_{i}}(x)+c_{r_{i}}).
\end{equation}
Since the set of points $\textbf{c}\in\mathbb{R}^{r_{i}}$ such that the map (\ref{13}) is transversal to $\mathcal{E}_{i}$ is dense in $\mathbb{R}^{r_{i}}$ we can choose $H_{i}$-invariant functions
$$\textbf{l}_{i}=(l_{1},\cdot\cdot\cdot,l_{r_{i}})\in \mathcal{C}^{\infty}_{H_{i}}(U_{i})^{r_{i}}$$
such that the map
\begin{equation}
  x\longmapsto(x,h_{1}(x)+l_{1}(x),\cdot\cdot\cdot,h_{r_{i}}(x)+l_{r_{i}}(x))
\end{equation}
is transverse to $\mathcal{E}_{i}$ in $U_{i}\times \mathbb{R}^{r_{i}}$. Let
\begin{equation}
\sigma_{i}=\sum^{r_{i}}_{j=1}l_{j}F_{j},
\end{equation}
then $\tilde{S}_{i}+\sigma_{i}$ is in general position over $U_{i}$.
Furthermore, $\sigma_{i}$ determines a unique $G$-equivariant local section $P_{i}:W_{i}\longrightarrow E$ such that $S\mid_{W_{i}}+P_{i}$ is in general position.
Let $W_{q+1}=B-S^{-1}(0)$, then $W_{q+1}$ is a $G$-invariant open subset since $S^{-1}(0)$ is closed and $G$-invariant.
Notice that
$$W_{0},W_{1},...,W_{q+1}$$
form a $G$-invariant open covering of $B$, there exists a $G$-invariant partition of unity on $B$, i.e. there are $G$-invariant smooth functions
$$\chi_{j}:B\longrightarrow[0,1],0\leq j\leq q+1$$
such that
$$\textmd{supp}(\chi_{j})\subset W_{j},\quad\sum^{q+1}_{j=1}\chi_{j}(x)=1,\,\forall x\in B.$$
Let $P=\sum^{q}_{j=0}\chi_{j}P_{j}$, then $P$ is supported in $\bigcup_{i=0}^{q}W_{i}$, which is a $G$-invariant open neighborhood of $S^{-1}(0)$.
According to the openness and the density of the set of the smooth equivariant sections which are in general position, via the choice of
$$(\textbf{l}_{0},\cdot\cdot\cdot,\textbf{l}_{q})\in\prod^{q}_{i=0}\mathcal{C}^{\infty}_{H_{i}}(U_{i})^{r_{i}}$$
we can make $S+P$ be in general position with respect to the zero section over $B$.

We are now in a position to verify that the zero locus $(S+P)^{-1}(0)\subset B$ represents a homology class in $H_{n-k}(B;\mathbb{Z})$.
Note that $E$ is a $G$-vector bundle over $B$, therefore $B$ can be considered as an embedded $G$-invariant submanifold of $E$.
For the simplicity, let $\hat{S}=S+P$.
Note that $\hat{S}:B\longrightarrow E$ is a $G$-equivariant smooth section which is in general position with respect to the zero section.
Hence, from the result of Bierstone (cf. \cite[Proposition 6.5]{EB77}), the zero locus $X=\hat{S}^{-1}(0)\subset B$ is a compact Whitney object with $G$-invariant submanifolds as its strata. From Proposition \ref{3.0.19}, for each closed subgroup $H$ of $G$ the section $\hat{S}_{H}:B_{(H)}\longrightarrow \mathcal{E}_{H}$ is transverse to the zero section of $\mathcal{E}_{H}$.
Thus we get $\hat{S}^{-1}_{H}(0)=X\cap B_{(H)}$ is a $G$-invariant submanifold with dimension $r_{H}:=\textmd{dim}\,B_{(H)}-\textmd{rank}\,\mathcal{E}_{H}$.
Let $X_{H}=\hat{S}^{-1}_{H}(0)$, then
$$X=\bigsqcup_{H<G}X_{H}.$$
In particular, if $H=e$ is trivial subgroup of $G$, then $B_{(e)}=B_{e}$ is an open subset of $B$ so that $B_{e}$ is oriented and $\textmd{dim}\,B_{(e)}=\textmd{dim}\,B$.
Furthermore, the orientations on $B_{(e)}$ and $E$ determines an orientation on $X_{e}$, i.e. $X_{e}$ is an oriented submanifold with dimension $r_{e}=n-k$. Note that the coindex of $(B,E)$ satisfies $\textmd{coind}(B,E)>1$, we obtain $r_{H}\leq n-k-2$ when $H\neq e$.
At last we get that $X\subset B$ is an oriented compact $G$-invariant Whitney object with dimension $n-k$; especially, there is no codimension one stratum thus the cycle condition is automatic.
Therefore $X$ yields a $G$-invariant $(n-k)$-geometric cycle $\xi_{X}$; moreover, through the representation map (\ref{2}) we get a homology class
$[\xi_{X}]\in H_{n-k}(B;\mathbb{Z})$.

Finally, we have to show that homology class $[\xi_{X}]$ is independent of the choice of $P$.
To prove such independence, we only need to verify that different equivariant perturbations of section $S$ yield the $G$-invariant $(n-k)$-geometric cycles which are cobordant.
In this step we need the following lemma which is a relative version of Theorem \ref{thm2.2.9} and we give its proof at the end of this section for the completeness.
\begin{lem}\label{4.20}
Let $\pi:E\longrightarrow B$ be a $G$-vector bundle, $S:B\longrightarrow E$ be a $G$-equivariant smooth section and $K\subset B$ be a $G$-invariant closed compact subset.
If $S$ is in general position with respect to the zero section over $K$, then there exists a $G$-equivariant smooth section $\tilde{S}$ such that $\tilde{S}$ is in general position with respect to the zero section over $B$, and the restriction of $\tilde{S}$ on $K$ is equivalent to $S$, i.e. $\tilde{S}\mid_{K}=S\mid_{K}$.
\end{lem}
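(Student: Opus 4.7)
The strategy is to mimic the construction in the proof of Theorem \ref{Main Thm1}, but to arrange that all local perturbations are supported away from $K$. The key preliminary observation is that being in general position is an open condition at each point (the property persists in a neighborhood of any point where it holds, as noted just before Theorem \ref{thm2.2.9}), so the set
\[
V := \{x \in B \mid S \text{ is in general position w.r.t.\ the zero section at } x\}
\]
is a $G$-invariant open subset of $B$ containing $K$.

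Next, $S^{-1}(0) \setminus V$ is a compact subset of the $G$-invariant open set $B \setminus K$. For each $x \in S^{-1}(0) \setminus V$ the differential slice theorem supplies a triple $(U_{x},\phi_{x},G_{x})$ producing a $G$-invariant open neighborhood of the orbit $G(x)$ in $B$; after shrinking, I may arrange this neighborhood to lie in $B\setminus K$ (possible since $G \cdot x \cap K=\emptyset$ and $K$ is closed and $G$-invariant). By compactness, finitely many such invariant neighborhoods $W_{0},\ldots,W_{q}$ cover $S^{-1}(0)\setminus V$, with $\overline{W_i} \subset B\setminus K$. Together with $W_{q+1} := V$ and $W_{q+2} := B \setminus S^{-1}(0)$ they form a $G$-invariant open cover of $B$, and I choose a $G$-invariant partition of unity $\{\chi_{j}\}_{j=0}^{q+2}$ subordinate to it.

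On each $W_{i}$ $(i=0,\ldots,q)$ I would construct an $H_{i}$-equivariant perturbation $\sigma_{i}\in \mathcal{C}^{\infty}_{H_{i}}(U_{i},E_{i})$ exactly as in the proof of Theorem \ref{Main Thm1}, by perturbing the graph map via parameters $\mathbf{l}_i \in \mathcal{C}^{\infty}_{H_i}(U_i)^{r_i}$ so that the perturbed graph is transverse to the minimal Whitney stratification of the universal variety $\mathcal{E}_i$. Let $P_i: W_i \to E$ be the corresponding $G$-equivariant local section and set
\[
P := \sum_{i=0}^{q} \chi_{i} P_{i}, \qquad \tilde{S} := S + P.
\]
Because $\mathrm{supp}(\chi_i) \subset W_i \subset B \setminus K$ for every $i \le q$, we have $P|_K = 0$ and hence $\tilde{S}|_K = S|_K$. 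To verify that $\tilde{S}$ is in general position over all of $B$: on $W_{q+2} = B \setminus S^{-1}(0)$ this is automatic provided $P$ is $C^{0}$-small; on $W_{q+1} = V$ the section $S$ is already in general position, so small Whitney $C^{\infty}$ perturbations preserve this property by openness; and on each $W_{i}$ ($i \le q$) it follows from the construction of $\sigma_i$, together with the Baire-category argument from the proof of Theorem \ref{Main Thm1}, choosing $(\mathbf{l}_0,\ldots,\mathbf{l}_q)$ in the intersection of open dense subsets of $\prod_i \mathcal{C}^{\infty}_{H_i}(U_i)^{r_i}$.

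The main obstacle is to reconcile the two competing demands on $P$: it must be \emph{small} in the Whitney $C^{\infty}$-topology (to keep $\tilde{S}$ in general position on $V$ and to avoid creating spurious zeros outside $S^{-1}(0)$), yet \emph{flexible} enough to achieve general position on the sets $W_i$. This is resolved because, by Theorem \ref{thm2.2.9}, the set of admissible local perturbations is a countable intersection of open dense subsets and is therefore residual; in particular it meets every neighborhood of zero, so the parameters $\mathbf{l}_i$ may be chosen arbitrarily small while still yielding general position on every $W_i$.
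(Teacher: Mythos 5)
Your proof takes a genuinely different route from the paper's. The paper first picks a section $S^{\prime}$ that is in general position over \emph{all} of $B$ (such a section exists by the density part of Theorem \ref{thm2.2.9}), covers the compact set $K$ by finitely many $G$-invariant open sets $U_1,\dots,U_l$, sets $U_0 := B\setminus K$, and interpolates via a $G$-invariant partition of unity: $\tilde{S} = \rho_0 S^{\prime} + \rho_1 S$, where $\rho_0 = \chi_0$ and $\rho_1 = \sum_{i\ge 1}\chi_i$. Since $\rho_1\equiv 1$ on $K$ and $\rho_0\equiv 1$ off $U_K := \bigcup_{i\ge 1} U_i$, one gets $\tilde{S}|_K = S|_K$ and $\tilde{S} = S^{\prime}$ outside $U_K$, and a density argument then handles the transition region $U_K\setminus K$. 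Your approach instead perturbs $S$ additively by patching local slice perturbations supported away from $K$, mirroring the construction used to prove Theorem \ref{Main Thm1}.

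There is, however, a gap. You assert that $S^{-1}(0)\setminus V$ is compact, but this is not implied by the hypotheses: the lemma is stated for a general $G$-manifold $B$, and the paper actually applies it with base $B\times\mathbb{R}$, which is not compact. There $S^{-1}(0)$ can easily be unbounded (for example, if $S_0 = S_1$ then $S^{-1}(0) = S_0^{-1}(0)\times\mathbb{R}$), so the finite slice cover of $S^{-1}(0)\setminus V$ that your argument relies on need not exist. The paper's proof never needs such a cover: the only finite subcover it extracts is of $K$, which is compact by hypothesis, and the rest of $B$ is handled at one stroke by the single open set $U_0 = B\setminus K$ together with the globally good section $S^{\prime}$. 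Your argument could be repaired by replacing the finite cover of $S^{-1}(0)\setminus V$ with a countable locally finite one (available since $B$ is $\sigma$-compact) and running the Baire-category selection over a countable product of parameter spaces, but as written the compactness claim is unjustified and the proof does not cover the case the paper actually needs.
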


Suppose that $S_{0}$ and $S_{1}$ are two $G$-equivariant smooth sections which are in general position.
Let $X_{0}=S^{-1}_{0}(0)$ and $X_{1}=S^{-1}_{1}(0)$.
Then we get two $G$-invariant $(n-k)$-geometric cycles, denoted by $\xi_{0}$ and $\xi_{1}$ such that $|\xi_{0}|=X_{0}$ and $|\xi_{1}|=X_{1}$.
Let $G$ acts on $\mathbb{R}$ trivially, then $E\times \mathbb{R}$ and $B\times \mathbb{R}$ are two $G$-spaces and we can construct a new $G$-vector bundle of rank $k$ as follows
\[\begin{array}{ccc}
\mathbb{R}^{k}&
\stackrel{}{\longrightarrow} &
E\times \mathbb{R} \\
&&
\Big\downarrow\vcenter{%
\rlap{$\scriptstyle{\pi}\,$}}\\
& &
B\times \mathbb{R}
\end{array}\]
Define a section of the above $G$-vector bundle
\begin{equation}
S:B\times \mathbb{R}\longrightarrow E\times \mathbb{R},\,(x,t)\longmapsto (1-t)S_{0}(x)+tS_{1}(x).
\end{equation}
Clearly $S$ is a $G$-equivariant smooth section and $S(x,0)=S_{0}(x)$, $S(x,1)=S_{1}(x)$.
Note that $S_{0}$ and $S_{1}$ are in general position over $B$, hence $S$ is in general position over the compact and closed subset $K=K_{0}\cup K_{1}$ of $B\times \mathbb{R}$, where $K_{0}=X_{0}\times \{0\}$ and $K_{1}=X_{1}\times \{1\}$.
By the above lemma we can construct a $G$-equivariant smooth section of $E\times \mathbb{R}$, denoted by $\tilde{S}$, which is in general position and $\tilde{S}\mid_{K}=S\mid_{K}$.
Let $X=\tilde{S}^{-1}(0)\cap(B\times [0,1])$, then $X$ is an oriented $(n-k+1)-$dimensional compact Whitney object; moreover, $X$ yields a $(n-k+1)$-geometric chain $\eta$ such that $|\eta|=X\subset B\times [0,1]$.
From the equivariant isotopy theorem (cf. \cite[Theorem 1.5]{EB77}) there exists a $\delta>0$ and an equivariant homeomorphism
\begin{equation}
\Upsilon_{0}:B\times(-\delta,\delta)\longrightarrow B\times(-\delta,\delta)
\end{equation}
covering the identity map, such that the restriction $\Upsilon_{0}|_{B\times\{0\}}$ is the identity map and
\begin{eqnarray*}
\Upsilon_{0}((\tilde{S}|_{B\times(-\delta,\delta)})^{-1}(0))&=&S^{-1}_{0}(0)\times(-\delta,\delta)\\
&=&|\xi_{0}|\times(-\delta,\delta).
\end{eqnarray*}
Similarly, there exists a $\epsilon>0$ and an equivariant homeomorphism
\begin{equation}
\Upsilon_{1}:B\times(1-\epsilon,1+\epsilon)\longrightarrow B\times(1-\epsilon,1+\epsilon)
\end{equation}
such that $\Upsilon_{1}|_{B\times\{1\}}$ is the identity map and
\begin{eqnarray*}
\Upsilon_{1}((\tilde{S}|_{B\times(1-\epsilon,1+\epsilon)})^{-1}(0))&=&S^{-1}_{1}(0)\times(1-\epsilon,1+\epsilon)\\
&=&|\xi_{1}|\times(1-\epsilon,1+\epsilon).
\end{eqnarray*}
Let $\theta=\min\{\delta,\epsilon\}$, then we have
\begin{enumerate}
  \item $|\eta|\cap B\times[0,\theta)=|\xi_{0}|\times[0,\theta)$;
  \item $|\eta|\cap B\times(1-\theta,1]=|\xi_{1}|\times(1-\theta,1]$;
  \item $\partial\eta=\xi_{1}\times\{1\}-\xi_{0}\times\{0\}$.
\end{enumerate}
Thus $\xi_{0}$ and $\xi_{1}$ are cobordant and they represent the same homology class in $H_{n-k}(B;\mathbb{Z})$.
This completes the proof.
\end{proof}

\begin{proof}[Proof of Lemma \ref{4.20}]
Since $B$ is a proper $G$-manifold, for every point $x\in B$ there exists a $G$-invariant open neighborhood of $x$, denoted by $U_{x}\subset B$.
Clearly $K$ has a $G$-invariant open covering
$$K\subset\bigcup_{x\in K}U_{x}.$$
Note that $K$ is compact, there exist finitely many points $x_{1},...,x_{l}$ of $K$ such that
$$K\subset\bigcup^{l}_{i=1}U_{i}$$
where $U_{i}=U_{x_{i}}$. Let $U_{K}=\cup^{l}_{i=1}U_{i}$, then $U_{K}$ is a $G$-invariant open neighborhood of $K$.
As $K$ is  $G$-invariant and closed, $U_{0}:=B-K$ is a $G-$invariant open subset of $B$.
It follows that
$$U_{0},U_{1},...,U_{l}$$
forms a finite open covering of $B$ with $G$-invariant open subsets.
Using the $G$-invariant partition of unity on proper $G$-manifold we have the $G$-invariant smooth functions
$$\chi_{i}:B\longrightarrow [0,1],0\leq i\leq l$$
such that $supp(\chi_{i})\subset U_{i}$, and $\sum^{l}_{i=0}\chi_{i}(x)=1$ for any $x$ of $B$.
Furthermore, we get
\begin{eqnarray*}
\chi_{0}(x)=\left\{\begin{array}{l}
0;x\in K\\
1;x\in B-U_{K}
\end{array}
\right.
\end{eqnarray*}
\begin{eqnarray*}
\sum^{l}_{i=1}\chi_{i}(x)=\left\{\begin{array}{l}
1;x\in K\\
0;x\in B-U_{K}
\end{array}
\right.
\end{eqnarray*}
Assume that $\rho_{0}=\chi_{0}$ and $\rho_{1}=\sum^{l}_{i=1}\chi_{i}$.
For any $G$-equivariant smooth section $S^{\prime}$ which is in general position with respect to the zero section over $B$, let $\tilde{S}=\rho_{0}S^{\prime}+\rho_{1}S$, then $\tilde{S}$ is a $G$-equivariant smooth section.
In particular, $\tilde{S}$ is in general position over the invariant closed subset $K\cup(B-U_{K})$ and $\tilde{S}\mid_{K}=S\mid_{K}$.
In fact, it is in general position over an invariant neighborhood of $K\cup(B-U_{K})$ since a smooth equivariant map in general position at a point implies that it is in general position over an invariant neighborhood of this point (cf. \cite[Lemma 6.2 and Proposition 6.3]{EB77}).
According to the density we can choose $S^{\prime}$ such that $\tilde{S}$ is in general position over $U_{K}-K$.
\end{proof}

\begin{ex} Given three coprime integers $p_{0}$, $p_{1}$ and $p_{2}$, consider
$$B=S^{5}=\biggl\{(z_{0},z_{1},z_{2})\in \mathbb{C}^{3}\mid\sum^{2}_{i=0}\mid z_{i}\mid^{2}=1\biggr\}.$$
Let $G=S^{1}$ acts on $B$ by
$$\lambda(z_{0},z_{1},z_{2})=(\lambda^{p_{0}}z_{0},\lambda^{p_{1}}z_{1},\lambda^{p_{2}}z_{2}),\quad \forall \lambda\in S^{1}.$$
Then the singular strata of the orbit type stratification of $B$ are
$$B_{0}:=\biggl\{b\in B\mid G_{b}=\mathbb{Z}_{p_{0}}\biggr\}\cong S^{1}/\mathbb{Z}_{p_{0}};$$
$$B_{1}:=\biggl\{b\in B\mid G_{b}=\mathbb{Z}_{p_{1}}\biggr\}\cong S^{1}/\mathbb{Z}_{p_{1}};$$
$$B_{2}:=\biggl\{b\in B\mid G_{b}=\mathbb{Z}_{p_{2}}\biggr\}\cong S^{1}/\mathbb{Z}_{p_{2}}.$$
Let $\pi:E\longrightarrow B$ be a $S^{1}$-equivariant plane bundle.
Note that the codimensions of $B_{0}$, $B_{1}$ and $B_{2}$ are $4$, and moreover, the rank of obstruction bundle over each singular stratum is smaller than or equivalent to $2$, and by the above theorem we may obtain the invariant Euler cycle via equivariant perturbation.
\end{ex}
\begin{ex}[An application in symplectic geometry]
Theorem \ref{Main Thm1} can be applied to the study of symplectic geometry.
Assume that $(M,\omega,J)$ is a compact spherically positive symplectic manifold and $L\subset M$ is a relatively spin Lagrangian submanifold, and moreover, let $\beta\in H_{2}(M,L;\mathbb{Z})$.
In the recent research paper \cite{FOOO13} Fukaya-Oh-Ohta-Ono studied the moduli space of stable $(k+1)$-marked pseudo-holomorphic discs with respect to $L$ and $\beta$.
They proved that there exists an oriented Kuranishi structure on the moduli space $\mathcal{M}^{main}_{k+1}(\beta;P_{1},\cdot\cdot\cdot,P_{k})$
\footnote{For the details of the definition of Lagrangian Floer moduli space and its Kuranishi structure refer to the paper \cite[Sections 2 and 11]{FOOO13}.}, and furthermore, they developed the Lagrangian Floer theory over $\mathbb{Z}$ coefficients (see \cite[Theorem 1.1]{FOOO13}).

To develop the Lagrangian Floer theory over $\mathbb{Z}$, the main technique is to construct a single-valued perturbation of the moduli space, which can give rise to a virtual moduli cycle over $\mathbb{Z}$.
Using the notion of the sheaf of groups and the notion of normal bundles in the sense of stacks the authors constructed a suitable single-valued perturbation of the Kuranishi structure (see \cite[Theorem 3.1]{FOOO13}).
Applying this single-valued perturbation to the concrete Lagrangian Floer moduli space of the spherically positive symplectic manifold, they constructed the Lagrangian Floer theory over integers.

In fact, for any point $p\in\mathcal{M}^{main}_{k+1}(\beta;P_{1},\cdot\cdot\cdot,P_{k})$ the Kuranishi chart associated to $p$ is an oriented $\Gamma_{p}$-equivariant moduli problem $(V_{p},E_{p},S_{p})$ with coindex $>$1, where $\Gamma_{p}$ is a finite group (see \cite[Proposition 12.1]{FOOO13}). More precisely, $E_{p}$ is a $\Gamma_{p}$-equivariant vector bundle over $V_{p}$ and $S_{p}:V_{p}\rightarrow E_{p}$ is a smooth $\Gamma_{p}$-equivariant section.
Note that $\textmd{coind}\,(V_{p},E_{p})>1$, therefore, by Theorem \ref{Main Thm1} we may construct a $\Gamma_{p}$-equivariant perturbation of $S_{p}$ such that the perturbed section, denoted by $S^{\prime}_{p}$, is in general position with respect to the zero section of $E_{p}$ over $V_{p}$.
Furthermore, we can construct a global perturbation of the moduli space by gluing together those local equivariant perturbations over Kuranishi charts in a compatible way.
In particular, this global perturbation may yield a geometric cycle with dimension equal to the virtual dimension of the moduli space.
\end{ex}

\section{Transversal intersection of $S^{1}$-moduli problems}
In this section we study the intersection problem of $S^{1}$-moduli problems.
\begin{defn}[Goresky \cite{RMG81}] \label{def5.0.23}
Let $X$ be a fixed Whitney object.
Assume that $V$ and $W$ are two substratified objects in $X$.
We say $V$ is \emph{transverse} to $W$ provided that for every stratum $R\subset V$ and every stratum $S\subset W$ satisfy:
(1) $R\cap S=\emptyset$ or;
(2) $R$ is transverse to $S$ in the stratum $X_{i}\subset X$ which contains $R$ and $S$.
\end{defn}

For a compact smooth $n$-manifold $B$ on which $G=S^{1}$ acts, there exists a canonical Whitney stratification on $B$ determined by orbit types.
For the simplicity we assume that the $G$-action is semi-free and the $G$-fixed loci is connected.
With this assumption there exist only two orbit types.
Let
$$B_{0}=\{x\in B| G_{x}=e\},\quad B_{1}=\{x\in B| G_{x}=G\}$$
then $B=B_{0}\sqcup B_{1}$. Let $(B,E_{\alpha},S_{\alpha})$ and $(B,E_{\beta},S_{\beta})$ be two oriented $G$-moduli problems such that $\textmd{rank}\,E_{\alpha}=k$ and $\textmd{rank}\,E_{\beta}=n-k$.
Assume that $S_{\alpha}$ and $S_{\beta}$ are in general position then the associated moduli spaces
$$M_{\alpha}=\{x\in B| S_{\alpha}(x)=0\},\quad M_{\beta}=\{x\in B| S_{\beta}(x)=0\}$$
are Whitney substratified objects with $G$-invariant strata in $B$.
Let
$$M_{\alpha,0}= M_{\alpha}\cap B_{0},\quad M_{\alpha,1}= M_{\alpha}\cap B_{1}$$
and
$$M_{\beta,0}= M_{\beta}\cap B_{0},\quad M_{\beta,1}= M_{\beta}\cap B_{1}$$
then the Whitney stratifications induced by orbit types on $M_{\alpha}$ and $M_{\beta}$ are
$$M_{\alpha}=M_{\alpha,0}\sqcup M_{\alpha,1},\qquad M_{\beta}=M_{\beta,0}\sqcup M_{\beta,1}.$$

Denote the partitions of $(B,E_{\alpha},S_{\alpha})$ and $(B,E_{\beta},S_{\beta})$ by
$$(B,E_{\alpha},S_{\alpha})=(B_{0},\mathcal{E}_{\alpha,0},S_{\alpha,0})\sqcup(B_{1},\mathcal{E}_{\alpha,G},S_{\alpha,G})$$
and
$$(B,E_{\beta},S_{\beta})=(B_{0},\mathcal{E}_{\beta,0},S_{\beta,0})\sqcup(B_{1},\mathcal{E}_{\beta,G},S_{\beta,G}).$$
Suppose that $M_{\alpha}$ is transverse to $M_{\beta}$, by Definition \ref{def5.0.23} we get that $M_{\alpha,0}$ is transverse to $M_{\beta,0}$ in $B_{0}$ and $M_{\alpha,1}$ is transverse to $M_{\beta,1}$ in $B_{1}$.
Since $B_{0}\subset B$ is an open subset, $\textmd{dim}\,B_{0}=n$.
Observe that $M_{\alpha,0}$ is $(n-k)$-dimensional and $M_{\beta,0}$ is $k$-dimensional, if the transversal intersection $M_{\alpha,0}\cap M_{\beta,0}$ is non-trivial, i.e. $M_{\alpha,0}\cap M_{\beta,0}\neq\emptyset$, then $M_{\alpha,0}\cap M_{\beta,0}$ is an invariant submanifold with dimension 0.
In other aspect, for any $z\in M_{\alpha,0}\cap M_{\beta,0}$ the orbit $G(z)$ belongs to $M_{\alpha,0}\cap M_{\beta,0}$ since $M_{\alpha,0}\cap M_{\beta,0}$ is $G$-invariant.
Note that the isotropy subgroup $G_{z}=e$, $G(z)$ is isomorphic to $G=S^{1}$.
So
$$\textmd{dim}\,(M_{\alpha,0}\cap M_{\beta,0})\geq \textmd{dim}\,G(z)=1$$
and this leads to a contradiction with $\textmd{dim}\,(M_{\alpha,0}\cap M_{\beta,0})=0$.
Hence $M_{\alpha,0}\cap M_{\beta,0}=\emptyset$ and we have
$$Z:=M_{\alpha}\cap M_{\beta}=M_{\alpha,1}\cap M_{\beta,1}$$
is a submanifold of $B_{1}$.

From now on, we assume that the $G$-fixed subbundles $\mathcal{E}_{\alpha,G}\longrightarrow B_{1}$ and $\mathcal{E}_{\beta,G}\longrightarrow B_{1}$ are oriented.
Consider the obstruction bundles $\mathfrak{o}_{\alpha}:\mathcal{O}_{\alpha,1}\longrightarrow B_{1}$ and $\mathfrak{o}_{\beta}:\mathcal{O}_{\beta,1}\longrightarrow B_{1}$.
Suppose that
$$\textmd{rank}\,\mathcal{O}_{\alpha,1}=k-n_{\alpha}$$
and
$$\textmd{rank}\,\mathcal{O}_{\beta,1}=n-k-n_{\beta},$$
where $n_{\alpha}$ and $n_{\beta}$ are the ranks of the $G$-fixed subbundles $\mathcal{E}_{\alpha,G}$ and $\mathcal{E}_{\beta,G}$ respectively.
The orientations of $\mathcal{E}_{\alpha,G}$ and $\mathcal{E}_{\beta,G}$ induce the orientations of the obstruction bundles.

Let $\textmd{dim}\,B_{1}=n_{1}$.
Observe that $S_{\alpha}:B\longrightarrow E_{\alpha}$ is in general position, the section
$$S_{\alpha,G}=S_{\alpha}|_{B_{1}}:B_{1}\longrightarrow \mathcal{E}_{\alpha,G}$$
is transverse to the zero section of the $G$-fixed bundle $\mathcal{E}_{\alpha,G}$ and the zero locus
$$M_{\alpha,1}=S^{-1}_{\alpha,G}(0)\subset B_{1}$$
is an oriented submanifold with dimension $n_{1}-n_{\alpha}$.
Similarly,
$$M_{\beta,1}=S^{-1}_{\beta,G}(0)\subset B_{1}$$
is an oriented $(n_{1}-n_{\beta})$-dimensional submanifold.
Note that $M_{\alpha}$ and $M_{\beta}$ are Whitney substratified objects, however, on the level of set the intersection set $Z=M_{\alpha}\cap M_{\beta}$ is a submanifold of fixed loci $B_{1}$ with dimension $n_{1}-n_{\alpha}-n_{\beta}$.

Consider the direct sum of $(B,E_{\alpha},S_{\alpha})$ and $(B,E_{\beta},S_{\beta})$.
Let $E=E_{\alpha}\oplus E_{\beta}$ and $S=S_{\alpha}\oplus S_{\beta}$, then we get a new oriented $G$-moduli problem $(B,E,S)$ with $\textmd{dim}\,B=\textmd{rank}\,E=n$.
The associated partition of $(B,E,S)$ is
$$(B,E,S)=(B_{0},\mathcal{E}_{0},S_{0})\sqcup(B_{1},\mathcal{E}_{G},S_{G})$$
i.e.
\[
\begin{aligned}
&\phantom{\mathcal{M}} \\
&\phantom{\mathcal{M}} \\
 \end{aligned}
 \qquad\qquad
\xymatrix{
 \mathcal{E}_{0}   \ar@(ul,dl)_{\textstyle  G}\ar@{->}[d]     \\
 B_{0} \ar@(ul,dl)_{\textstyle G} \ar@/_1pc/[u]_{S_{0}}
}
\begin{aligned}
&\phantom{\mathcal{M}} \\
&\phantom{\mathcal{M}} \\
 \end{aligned}
 \qquad\qquad
\xymatrix{
 \mathcal{E}_{G}   \ar@(ul,dl)_{\textstyle  G}\ar@{->}[d]     \\
 B_{1} \ar@(ul,dl)_{\textstyle G} \ar@/_1pc/[u]_{S_{G}}
}
\]
where
$$\mathcal{E}_{0}=\mathcal{E}_{\alpha,0}\oplus \mathcal{E}_{\beta,0},\quad S_{0}=S_{\alpha,0}\oplus S_{\beta,0}$$
and
$$\mathcal{E}_{G}=\mathcal{E}_{\alpha,G}\oplus \mathcal{E}_{\beta,G},\quad S_{G}=S_{\alpha,G}\oplus S_{\beta,G}.$$
The obstruction bundle over $B_{1}$ is
$$\mathfrak{o}:\mathcal{O}_{1}\longrightarrow B_{1}$$
where $\mathcal{O}_{1}=\mathcal{O}_{\alpha,1}\oplus \mathcal{O}_{\beta,1}$ and $\mathfrak{o}=\mathfrak{o}_{\alpha}\oplus\mathfrak{o}_{\beta}$.

Note that $S_{\alpha,0}$ and $S_{\beta,0}$ are transverse to the zero sections of $\mathcal{E}_{\alpha,0}$ and $\mathcal{E}_{\beta,0}$ respectively and $M_{\alpha,0}=S^{-1}_{\alpha,0}(0)$ intersects with $M_{\beta,0}=S^{-1}_{\beta,0}(0)$ in $B_{0}$ transversally.
This implies that $S_{0}$ is transverse to the zero section of $\mathcal{E}_{0}$ and we get
$$S^{-1}_{0}(0)=M_{\alpha,0}\cap M_{\beta,0}=\emptyset.$$
Similarly, we obtain that $S_{G}$ is transverse to the zero section of $\mathcal{E}_{G}$ and
$$
S^{-1}_{G}(0)=M_{\alpha,1}\cap M_{\beta,1}=Z.
$$

Define
$$\Psi(E_{\alpha},E_{\beta})=\int_{B}e(E_{\alpha}\oplus E_{\beta}).$$
We call $\Psi(E_{\alpha},E_{\beta})$ the \emph{intersection number} of $G$-moduli problems $(B,E_{\alpha},S_{\alpha})$ and $(B,E_{\beta},S_{\beta})$.
Firstly, we consider the non-degenerated case, i.e. $S_{\alpha}$ and $S_{\beta}$ are transverse to the zero sections and the moduli spaces $M_{\alpha}$ intersect with $M_{\beta}$ in $B$ transversally. In this case $M_{\alpha}\subset B$ is an oriented $G$-invariant submanifold of dimension $n-k$, and $M_{\beta}\subset B$ is a $k$-dimensional invariant submanifold.
Since $M_{\alpha}$ is transverse to $M_{\beta}$ in $B$ the intersection number is
$${\#}(M_{\alpha}\cdot M_{\beta})=\int_{B}\textmd{PD}(M_{\alpha})\wedge \textmd{PD}(M_{\beta})$$
where $\textmd{PD}(\cdot)$ is the Poincar\'{e} dual.
Note that $\textmd{PD}(M_{\alpha})=e(E_{\alpha})$ and $\textmd{PD}(M_{\beta})=e(E_{\beta})$, and hence, we get
\begin{eqnarray*}
\int_{B}\textmd{PD}(M_{\alpha})\wedge \textmd{PD}(M_{\beta})&=&\int_{B}e(E_{\alpha})\wedge e(E_{\beta})\\
&=&\int_{B}e(E_{\alpha}\oplus E_{\beta}).
\end{eqnarray*}
Therefore, in the non-degenerated case the intersection number of $G$-moduli problems is equivalent to the intersection number of the associated moduli spaces, i.e.
$$\Psi(E_{\alpha}, E_{\beta})= \,{\#}(M_{\alpha}\cdot M_{\beta}).$$

In general, the existence of the obstruction system of $G$-moduli problem implies that the equivariant smooth section which is transverse to the zero section do not always exist.
However, the equivariant sections which are in general position are generic, and especially we have:
\begin{thm}\label{Main Thm2}
Assume that $S_{\alpha}:B\longrightarrow E_{\alpha}$ and $S_{\beta}:B\longrightarrow E_{\beta}$ are in general position with respect to the zero sections
respectively.
If the $G$-moduli space $M_{\alpha}$ is transverse to the $G$-moduli space $M_{\beta}$ in the sense of Definition \ref{def5.0.23} then
$$\Psi(E_{\alpha}, E_{\beta})=\int_{Z}i^{*}\biggl(\frac{e_{G}(\mathcal{O}_{1})}{e_{G}(\mathcal{N}_{B_{1}/B})}\biggr)$$
where $Z=M_{\alpha}\cap M_{\beta}$, $\mathcal{N}_{B_{1}/B}$ is the normal bundle of $B_{1}$ in $B$ and $i^{*}$ is the map induced by the inclusion
$i:Z\hookrightarrow B_{1}$.
\end{thm}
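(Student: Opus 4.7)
The strategy is to cast the statement as an instance of Atiyah--Bott--Berline--Vergne equivariant localization. Observe first that by definition
\[
\Psi(E_\alpha,E_\beta)=\int_{B}e(E_\alpha\oplus E_\beta)=\int_{B}e(E),
\]
where $E=E_\alpha\oplus E_\beta$ has rank $n=\dim B$. Since $E$ is $G$-equivariant with $G=S^{1}$, the equivariant Euler class $e_G(E)$ lives in $H^{2n}_G(B)$ and forgets to the ordinary Euler class $e(E)\in H^{2n}(B)$; because $2n$ is the top degree, the equivariant pushforward $\int_{B}e_G(E)$ lies in $H^{0}_G(\mathrm{pt})\cong\mathbb{Z}$ and agrees with $\int_{B}e(E)$. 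Thus $\Psi(E_\alpha,E_\beta)$ may be computed as the equivariant integral of $e_G(E)$, and this is the step where all tools of equivariant cohomology become available.

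The second step is to apply localization. By the semi-freeness hypothesis, the $G$-fixed set of $B$ is exactly $B_1$, which is a single smooth component by assumption; its normal bundle $\mathcal{N}_{B_1/B}$ has no fixed subspace fiberwise, so $e_G(\mathcal{N}_{B_1/B})$ is invertible in the localized equivariant cohomology ring of $B_{1}$. The Atiyah--Bott--Berline--Vergne formula then yields
\[
\Psi(E_\alpha,E_\beta)=\int_{B_1}\frac{e_G(E|_{B_1})}{e_G(\mathcal{N}_{B_1/B})}.
\]
Next, decompose $E|_{B_1}=\mathcal{E}_G\oplus\mathcal{O}_{1}$ into fixed and moving parts, where $\mathcal{E}_G=\mathcal{E}_{\alpha,G}\oplus\mathcal{E}_{\beta,G}$ and $\mathcal{O}_1=\mathcal{O}_{\alpha,1}\oplus\mathcal{O}_{\beta,1}$. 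Since $G$ acts trivially on the fibers of $\mathcal{E}_G$, one has $e_G(\mathcal{E}_G)=e(\mathcal{E}_G)$, hence
\[
e_G(E|_{B_1})=e(\mathcal{E}_{\alpha,G})\cdot e(\mathcal{E}_{\beta,G})\cdot e_G(\mathcal{O}_1).
\]

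The final step identifies the ordinary Euler classes of $\mathcal{E}_{\alpha,G}$ and $\mathcal{E}_{\beta,G}$ with the Poincar\'e duals of the smooth strata $M_{\alpha,1}$ and $M_{\beta,1}$ in $B_{1}$. By Proposition \ref{3.0.19}, $S_{\alpha,G}$ and $S_{\beta,G}$ are transverse to the zero sections of $\mathcal{E}_{\alpha,G}$ and $\mathcal{E}_{\beta,G}$, so $M_{\alpha,1}$ and $M_{\beta,1}$ are genuine oriented submanifolds of $B_{1}$, Poincar\'e dual to $e(\mathcal{E}_{\alpha,G})$ and $e(\mathcal{E}_{\beta,G})$. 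The transversality hypothesis, in the sense of Definition \ref{def5.0.23}, forces $M_{\alpha,1}$ and $M_{\beta,1}$ to meet transversally \emph{inside} $B_{1}$, so that
\[
e(\mathcal{E}_G)=e(\mathcal{E}_{\alpha,G})\cdot e(\mathcal{E}_{\beta,G})=\mathrm{PD}_{B_1}(Z),
\]
where $Z=M_\alpha\cap M_\beta=M_{\alpha,1}\cap M_{\beta,1}$. Capping with $\mathrm{PD}_{B_1}(Z)$ then converts the integral over $B_1$ to an integral over $Z$ of the pullback along $i\colon Z\hookrightarrow B_1$, producing the claimed formula.

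The main technical obstacle is the bookkeeping that justifies the passage to equivariant cohomology and the application of localization: one must check carefully that the degree count permits identifying $\int_B e(E)$ with its equivariant lift, that $B_1$ really exhausts the $S^{1}$-fixed set under semi-freeness, and that $\mathcal{N}_{B_1/B}$ contains no fixed directions so that $e_G(\mathcal{N}_{B_1/B})$ is invertible. Once these foundational items are settled, the remaining reductions -- splitting $E|_{B_1}$ into fixed and moving summands, identifying $e(\mathcal{E}_G)$ with $\mathrm{PD}_{B_1}(Z)$ via the stratumwise transversality, and restricting to $Z$ -- are essentially formal.
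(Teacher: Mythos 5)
Your proposal is correct and follows essentially the same line as the paper: pass from $\int_B e(E)$ to $\int_B e_G(E)$, apply ABBV localization at the fixed locus $B_1$, split $E|_{B_1}=\mathcal{E}_G\oplus\mathcal{O}_1$, and identify the Euler class of $\mathcal{E}_G$ with the Poincar\'e dual of $Z$ before restricting to $Z$. The only (inessential) difference is in the last identification: the paper constructs an equivariant Thom form for $\mathcal{E}_G$ to show $e_G(\mathcal{E}_G)=\mathrm{PD}_G(Z)$, while you instead decompose $\mathcal{E}_G=\mathcal{E}_{\alpha,G}\oplus\mathcal{E}_{\beta,G}$ and invoke the transversality of $M_{\alpha,1}$ and $M_{\beta,1}$ in $B_1$ to get $e(\mathcal{E}_{\alpha,G})\cdot e(\mathcal{E}_{\beta,G})=\mathrm{PD}_{B_1}(Z)$; both arguments are valid and interchangeable here.
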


\begin{proof}
Let $\Theta_{G}\in \Omega^{n}_{G,vc}(E)$ be the equivariant Thom form of $E$.
By the definition of equivariant Thom form (cf. \cite[Theorem 6.4]{MQ86}), the leading component of $\Theta_{G}$, denoted by
$$\Theta=(\Theta_{G})_{[n]}\in \Omega^{n}_{vc}(E),$$
is a non-equivariant Thom form of $E$. Denote by
$i_{0}:B\longrightarrow E$
the embedding of $B$ in $E$ as the zero section, then the equivariant Euler class of $E$ is $e_{G}(E)=i^{*}_{0}(\Theta_{G})$ and the ordinary one is $e(E)=i^{*}_{0}(\Theta)$.
Observe that $\textmd{dim}\,B=n$, according to the definition of equivariant integral we have
\begin{eqnarray*}
\int_{B}e_{G}(E)&=&\int_{B}i^{*}_{0}(\Theta_{G})\\
&=&\int_{B}i^{*}_{0}((\Theta_{G})_{[n]})\\
&=&\int_{B}e(E),
\end{eqnarray*}
and therefore we get
\begin{equation}
  \Psi(E_{\alpha},E_{\beta})=\int_{B}e_{G}(E).
\end{equation}
Let $\mathcal{N}_{B_{1}/B}$ be the normal bundle of $B_{1}$ in $B$.
The $G$-action on $\mathcal{N}_{B_{1}/B}$ only fixes the zero section $B_{1}$.
This implies that the normal bundle $\mathcal{N}_{B_{1}/B}$ has even rank and is orientable.
In particular, with a fixed orientation the equivariant Euler class of the normal bundle $e_{G}(\mathcal{N}_{B_{1}/B})$ is invertible.
Using the Atiyah-Bott-Berline-Vergne localization formula (cf. \cite[Theorem C.53]{VVY97}) we have
\begin{equation}\label{14}
  \int_{B}e_{G}(E)=\int_{B_{1}}\frac{j^{*}e_{G}(E)}{e_{G}(\mathcal{N}_{B_{1}/B})}
\end{equation}
where $j^{*}$ is the map induced by the inclusion $j:B_{1}\hookrightarrow B$.
Note that $j^{*}e_{G}(E)=e_{G}(j^{*}E)$ and the pullback bundle $j^{*}E$ is equivalent to $E|_{B_{1}}$.
In other aspect, $E|_{B_{1}}$ can split into the direct sum of $G$-fixed subbundle and obstruction bundle which are all $G$-equivariant, i.e. $E|_{B_{1}}=\mathcal{E}_{G}\oplus \mathcal{O}_{1}$.
Using the equivariant Chern-Weil theory (cf. \cite[Chapter 8]{VGSS99}), and just following the proof of Whitney product formula for Euler class we get
\begin{equation}\label{15}
  j^{*}e_{G}(E)=e_{G}(\mathcal{E}_{G})\wedge e_{G}(\mathcal{O}_{1}).
\end{equation}
Consider the equivariant Euler class $e_{G}(\mathcal{E}_{G})$.
Note that the equivariant section $S_{G}:B_{1}\longrightarrow \mathcal{E}_{G}$ is transverse to the zero section, the zero locus $Z=S^{-1}_{G}(0)\subset B_{1}$ is an invariant submanifold.
Moreover, the normal bundle of $Z$ in $B_{1}$, denoted by $\mathcal{N}_{Z/B_{1}}$, is isomorphic to $\mathcal{E}_{G}\mid_{Z}$.
Assume that $\iota:B_{1}\hookrightarrow \mathcal{E}_{G}$ is the embedding of $B_{1}$ into $\mathcal{E}_{G}$ as the zero section.
Without loss of generality, we may choose an equivariant Thom form of $\mathcal{E}_{G}$, denoted by
$\Phi_{G}\in \Omega^{n_{\alpha}+n_{\beta}}_{G,vc}(\mathcal{E}_{G})$, such that the support of the pullback by $S_{G}$
$$S^{*}_{G}(\Phi_{G})\in \Omega^{n_{\alpha}+n_{\beta}}_{G}(B_{1}),$$
is contained in an invariant tubular neighborhood of $Z$ in $B_{1}$. Let
$$\Phi=(\Phi_{G})_{[n_{\alpha}+n_{\beta}]}\in \Omega^{n_{\alpha}+n_{\beta}}_{vc}(\mathcal{E}_{G})$$
be the leading component of $\Phi_{G}$, then $\Phi$ is a non-equivariant Thom form of $\mathcal{E}_{G}$.
Given any $z\in Z$, let $N_{z}$ be the fibre of $\mathcal{N}_{Z/B_{1}}$ at $z$ and $E_{G,z}$ be the fibre of $\mathcal{E}_{G}$ at $z$.
Because the image of a fibre of $\mathcal{N}_{Z/B_{1}}$ under $S_{G}$ is homotopic to a fibre of $\mathcal{E}_{G}$ we have
\begin{eqnarray*}
\int_{N_{z}}S^{*}_{G}(\Phi_{G})&=&\int_{N_{z}}S^{*}_{G}(\Phi)\quad(\textmd{dim}\,N_{z}=n_{\alpha}+n_{\beta})\\
&=&\int_{E_{G,z}}\Phi\quad\quad\,\,(\Phi\,\, is\,\, \textmd{Thom}\,\, \textmd{form})\\
&=&1.
\end{eqnarray*}
Note that $e_{G}(\mathcal{E}_{G})=\iota^{*}(\Phi_{G})$, the next thing to do is to verify that $\iota^{*}(\Phi_{G})$ is an equivariant Thom form of $\mathcal{N}_{Z/B_{1}}$.
The proof is straightforward since we have
\begin{eqnarray*}
\int_{N_{z}}\iota^{*}(\Phi_{G})&=&\int_{N_{z}}\iota^{*}(\Phi)\quad\,\,(\textmd{dim}\,N_{z}=n_{\alpha}+n_{\beta})\\
&=&\int_{N_{z}}S^{*}_{G}(\Phi)\quad(\iota^{*}(\Phi)-S^{*}_{G}(\Phi)\,\, is\,\, d-\textmd{exact})\\
&=&1.
\end{eqnarray*}
Denote by $\textmd{PD}_{G}(Z)$ the equivariant Poincar\'{e} dual of $Z$ in $B_{1}$, which is defined as an equivariant Thom form of the normal bundle $\mathcal{N}_{Z/B_{1}}$.
This follows that
\begin{equation}\label{16}
  e_{G}(\mathcal{E}_{G})=\textmd{PD}_{G}(Z).
\end{equation}
Let $i:Z\hookrightarrow B_{1}$ be the inclusion, combining (\ref{14}), (\ref{15}) and (\ref{16}) we get
\begin{eqnarray*}
\int_{B}e_{G}(E)&=&\int_{B_{1}}\frac{\textmd{PD}_{G}(Z)\wedge e_{G}(\mathcal{O}_{1})}{e_{G}(\mathcal{N}_{B_{1}/B})}\\
&=&\int_{Z}i^{*}\biggl(\frac{e_{G}(\mathcal{O}_{1})}{e_{G}(\mathcal{N}_{B_{1}/B})}\biggr).
\end{eqnarray*}
\end{proof}




\begin{thebibliography}{GP}

\bibitem{EB77}E. Bierstone, \emph{General position of equivariant maps},
              Trans. Amer. Math. Soc., \textbf{234} (2) (1977), 447-466.

\bibitem{GB72}G. Bredon, \emph{Introduction to compact transformation groups},
            Pure and Appl. Math., Vol. \textbf{46}, Academic Press New York and London, (1972).

\bibitem{CRS03}K. Cieliebak, I. Riera, D. Salamon, \emph{Equivariant moduli problems, branched manifolds, and the Euler class},
              Topology, \textbf{42}(3) (2003), 641-700.

\bibitem{SS92}S. Costenovle, S. Waner, \emph{$G$-transversality revisited},
             Proc. Amer. Math. Soc.,  Vol. \textbf{116}, No.2 (1992), 535-546.


\bibitem{MF177}M. Field, \emph{Transversality in $G$-manifolds},
                Trans. Amer. Math. Soc., Vol. \textbf{231}, No.2 (1977), 429-450.

\bibitem{MF377}M. Field, \emph{Stratifications of equivariant varieties},
              Bull. Austral. Math. Soc. Vol. \textbf{16} (1977), 279-295.

\bibitem{FOOO13}K. Fukaya, Y. G. Oh, H. Ohta, K. Ono, \emph{Lagrangian Floer theory over integers: spherically positive symplectic manifolds},
              Pure and Applied Mathematics Quarterly (Special Issue: In honor of Dennis Sullivan), Vol. \textbf{9}, No.2 (2013), 189-289.


\bibitem{FO99}K. Fukaya, K. Ono, \emph{Arnold conjecture and Gromov-Witten invariant},
              Topology, \textbf{5}, (1999), 933-1048.

\bibitem{VVY97}V. Ginzburg, V. Guillemin, Y. Karshon, \emph{Moment maps, cobordisms, and Hamiltonian group actions},
               Amer. Math. Soc. (1997).

\bibitem{RMG81}R. Goresky, \emph{Whitney stratified chains and cochains},
             Trans. Amer. Math. Soc., Vol. \textbf{267}, No.1 (1981), 175-196.

\bibitem{VGSS99}V. Guillemin, S. Sternberg, \emph{Supersymmetry and equivariant de Rham theory},
             Springer-Verlag Berlin Heidelberg, (1999).

\bibitem{HL92}I. Hambleton, R. Lee, \emph{Perturbation of equivariant moduli spaces},
             Math. Ann. 293 (1992), 17-37.

\bibitem{RKL82}R. Lashof, \emph{Equivariant bundles},
               Illinois J. Math., Vol. \textbf{26}, No.2 (1982), 257-271.

\bibitem{LT07}G. C. Lu, G. Tian, \emph{Constructing virtual Euler cycles and classes},
               Int. Math. Res. Surv., No.1 (2007).


\bibitem{MQ86}V. Mathai, D. Quillen, \emph{Superconnections, Thom classes, and equivariant differential forms},
              Topology, Vol. \textbf{25}, No.1 (1986), 85-110.

\bibitem{JM12}J. Mather, \emph{Notes on topological stability},
              Bull. Amer. Math. Soc., Vol. \textbf{49}, No.4 (2012), 475-506.

\bibitem{PWM08}P. Michor, \emph{Topics in differential geometry},
              GTM 93, American Mathematical Society, (2008).

\bibitem{RP61}R. Palais, \emph{On the existence of slices for actions of non-compact Lie groups},
              Annal. Math. \textbf{73} (1961), 293-323.

\bibitem{TP74}T. Petrie, \emph{Obstructions to transversality for compact Lie groups},
               Bull. Amer. Math. Soc., Vol. \textbf{80}, No.6 (1974), 1133-1136.

\bibitem{JP01}M. Pflaum, \emph{Analytic and geometric study of stratified spaces},
            LNM 1768, Springer-Verlag Berlin Heidelberg, (2001).

\bibitem{HW47}H. Whitney, \emph{Geometric methods in cohomology},
               Proc. Nat. Acad. Sci. U.S.A. \textbf{33} (1947), 7-9.

\bibitem{HW165}H. Whitney, \emph{Local properties of analytic varieties},
               Differential and combinatorial topology,
               Princeton Univ. Press, Princeton, (1965).
\end{thebibliography}
\end{document}